\title{On the derived functors of destabilization at odd primes}
\author[Geoffrey Powell]{Geoffrey M.L. Powell}
\address{Laboratoire Angevin de Recherche en Mathématiques, UMR 6093, 
  Faculté des Sciences, Université d'Angers, 
2 Boulevard Lavoisier,
49045 Angers, France}
\email{Geoffrey.Powell@math.cnrs.fr}
\keywords{Steenrod algebra -- unstable module -- destabilization -- derived functor -- total Steenrod power}
\subjclass[2000]{Primary 55S10; Secondary 18E10}
\thanks{This research was partially supported by the project ANR `HGRT', BLAN08-2 338236, HGRT}
\date{}
\newtheorem{theorem}{Theorem}[section]
\newtheorem{proposition}{Proposition}[section]
\newtheorem{corollary}{Corollary}[section]
\newtheorem{lemma}{Lemma}[section]
\newtheorem{THM}{Theorem}
\theoremstyle{definition}
\newtheorem{definition}{Definition}[section]
\newtheorem{example}{Example}[section]
\theoremstyle{remark}
\newtheorem{remark}{Remark}[section]
\newtheorem{nota}{Notation}[section]
\newcommand{\boldGamma}{\mathbf{\Gamma}}
\newcommand{\boldDelta}{\mathbf{\Delta}}
\newcommand{\rkan}{R_{\mathrm{Kan}}}
\newcommand{\bd}{^{\mathrm{bd}}}
\newcommand{\dash}{\mbox{-}}
\newcommand{\chcx}{\mathrm{Ch}}
\newcommand{\dhom}{\mathbb{D}}
\newcommand{\dcx}{\mathfrak{D}}
\newcommand{\even}{^{\mathrm{ev}}}
\newcommand{\odd}{^{\mathrm{odd}}}
\newcommand{\rfunctilde}{\widetilde{\mathscr{R}}}
\newcommand{\rfunc}{\mathscr{R}}
\newcommand{\Philocal}{\mathbf{\Phi}}
\newcommand{\oddgen}{\tilde{M}}
\newcommand{\euler}{\mathfrak{e}}
\newcommand{\sltilde}{\widetilde{SL}}
\newcommand{\largetensor}{\underline{\underline{\otimes}}}
\newcommand{\destab}{\mathsf{D}}
\newcommand{\sym}{\mathfrak{S}}
\newcommand{\alt}{\mathfrak{A}}
\newcommand{\amod}{\mathscr{M}}
\newcommand{\amodbd}{\amod\bd}
\newcommand{\unst}{\mathscr{U}}
\newcommand{\st}{\mathrm{St}}
\renewcommand{\phi}{\varphi}
\renewcommand{\epsilon}{\varepsilon}
\newcommand{\natop}{{\nat\op}}
\newcommand{\nat}{\mathbb{N}}
\newcommand{\zed}{\mathbb{Z}}
\newcommand{\field}{\mathbb{F}}
\newcommand{\obj}{\mathrm{Ob}\ }
\newcommand{\op}{^\mathrm{op}}
\newcommand{\cala}{\mathcal{A}}
\newcommand{\calc}{\mathcal{C}}
\begin{document}

\maketitle

\begin{abstract}
An explicit chain complex is constructed to calculate the derived functors of
destabilization at an odd prime,
generalizing constructions of Zarati and of Hung and Sum. The methods are based
on the ideas of Singer and Miller and also apply
at the prime two. A  structural result on the derived functors of
destabilization is deduced.
\end{abstract}

\section{Introduction}

The destabilization functor $\destab$ from the category $\amod$ of modules over
the mod $p$ Steenrod algebra $\cala$ to
the category $\unst$ of unstable modules is the left adjoint to the inclusion
$\unst \hookrightarrow
\amod$; it is right exact and has non-trivial left derived functors $\destab_s:
\amod \rightarrow \unst$.
These derived functors are of considerable interest in homotopy theory: for
example
Lannes and Zarati \cite{lannes_zarati_deriv_destab,zarati_these} used them to
prove a weak version of the Segal
conjecture; they have recently been used by Hai,  Schwartz and Nam 
\cite{hai_nam_schwartz} ($p=2$) and
Hai \cite{hai_BSMF} (for $p$ odd) to prove a generalization of the weak Segal
conjecture. This project was motivated in part by the need to gain a
better understanding of the action of Lannes' $T$-functor on the derived
functors of destabilization at odd primes.

At the prime two, Singer constructed functorial chain complexes with homology
calculating the derived functors of iterated loop functors
\cite{singer_loops_II}; these can be used to construct chain complexes for
calculating the functors $\destab_s$ (cf. Goerss \cite{goerss}, who works with
homology). Lannes and Zarati \cite{lannes_zarati_deriv_destab} gave an
independent approach at the prime two, calculating  the derived functors
$\destab_s (\Sigma^{-t}M)$ where $M$ is an unstable module and $t
\leq s$.

Both approaches  depend upon the  relationship between the Steenrod algebra and
the Dickson invariants $H^*(BV_s)^{GL_s}$ (the
algebra of invariants of the cohomology $H^* (BV_s)$ of a rank $s$ elementary
abelian $2$-group $V_s$ under the action
of the general linear group $GL_s$) for varying  $s$. For odd primes, this
relationship is more subtle and  is explained by the results of M{\`u}i
\cite{mui_mod_invt_symm} in terms of  an explicit subalgebra of the invariants
$H^* (BV_s) ^{\sltilde_s}$, where $\sltilde_s$ is an index two subgroup of
$GL_s$ containing the special linear group. 

The purpose of this paper is to give a construction of a functorial chain
complex to calculate the derived functors of
destabilization for odd primes. This unifies and  builds upon results in the
literature: the derived functors of iterated
loop functors at odd primes were studied by Li in his thesis \cite{li_these}
and, in joint work with Singer
\cite{li_singer}, he gave a chain complex for calculating the homology of the
Steenrod
algebra; Hung and Sum
\cite{hung_sum} modified and generalized to odd primes the approach of Singer
\cite{singer_invt_lambda}, leading to an
invariant-theoretic description of the Lambda algebra at odd primes; Zarati
generalized his work 
with Lannes \cite{lannes_zarati_deriv_destab} to the odd primary case.

The methods of this paper use a generalization of Zarati's constructions to all
$\cala$-modules, drawing on the
observations of Hung and Sum, which are based on ideas going back to Singer and
Miller. Writing $\chcx_{\geq 0} \amod$
for the abelian category of homological chain complexes (in non-negative
degrees), the main result is the following:

\begin{THM}
\label{THM:chain_complex}
 There is an exact functor
$
 \dcx_\bullet : \amod \rightarrow \chcx_{\geq 0} \amod
$
such that, for $M \in \obj \amod$ and $s \in \nat$, there is an isomorphism of
$\cala$-modules:
\[
 \destab_s M \cong H_ s (\dcx_\bullet M).
\]
\end{THM}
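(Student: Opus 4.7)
The plan is to construct $\dcx_\bullet$ explicitly via invariant theory, following Singer's philosophy and the odd-primary refinement by Hung and Sum \cite{hung_sum}. Concretely, I would define $\dcx_s M$ to be a functor of the form $\rfunc_s \otimes M$, where $\rfunc_s$ is built from the Mui invariants $H^*(BV_s)^{\sltilde_s}$ (localised at the Euler class $\euler$ and shifted appropriately in degree), and the $\cala$-action on the tensor product is twisted via an iterated total Steenrod power. The boundary $\dcx_s \to \dcx_{s-1}$ would be defined by residue-type operations on the localised invariants, in direct analogy with the $p=2$ construction of Lannes-Zarati and the odd-primary boundary of Hung-Sum.

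The formal properties should come easily. Since the underlying $\field_p$-vector space of $\dcx_s M$ is a tensor product of $M$ with a fixed vector space, each $\dcx_s \colon \amod \to \amod$ is exact; hence $\dcx_\bullet \colon \amod \to \chcx_{\geq 0}\amod$ is exact, and the homology functors $\{H_s(\dcx_\bullet -)\}_{s \geq 0}$ form a $\delta$-functor on $\amod$ into $\amod$ (actually into $\unst$, once one verifies that each $H_s$ lands in unstable modules). The identification $H_0(\dcx_\bullet M) \cong \destab M$ should then follow directly from the defining adjunction of $\destab$: the image of $\dcx_1 \to \dcx_0$ is designed precisely to cut out the submodule of $\dcx_0 M$ generated by the odd-primary unstability relations, so $H_0(\dcx_\bullet -)$ will co-represent the same functor on $\unst$ as $\destab$ does.

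The main obstacle is the acyclicity statement $H_s(\dcx_\bullet P) = 0$ for all $s \geq 1$ when $P$ is a free (hence projective) $\cala$-module; this is the odd-primary analogue of the Singer/Lannes-Zarati acyclicity theorem, and is where the invariant theory does the real work. The plan here is to exhibit an explicit contracting homotopy on $\dcx_\bullet (\cala \cdot [n])$ built from the divisibility structure of $H^*(BV_s)^{\sltilde_s}$ over its Dickson-like polynomial subalgebra, effectively recognising $\dcx_\bullet (\cala \cdot [n])$ as a Koszul-type resolution associated with the Mui invariants. Organising this computation at odd primes (where one must separately track the even and odd parts of the Mui generators, and invert $\euler$ carefully) is the heaviest and most delicate step.

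Granted the acyclicity, the theorem follows by a standard bicomplex argument. Choose a projective resolution $P_\bullet \twoheadrightarrow M$ in $\amod$ and form the bicomplex $\dcx_\bullet P_\bullet$. One spectral sequence, using exactness of $\dcx_s$ in $M$ together with $H_0 \cong \destab$ on projectives, collapses to the total left derived functor $\destab_s M$; the other, using acyclicity of $\dcx_\bullet P$ on each projective $P_\bullet$, collapses to $H_s(\dcx_\bullet M)$ concentrated in a single column. Comparing the two abutments yields the required natural isomorphism $\destab_s M \cong H_s(\dcx_\bullet M)$ for all $M \in \obj \amod$ and $s \in \nat$.
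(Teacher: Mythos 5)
Your overall skeleton---build an exact functorial chain complex from Mui invariants via the total Steenrod power, identify $H_0$ with $\destab$, prove acyclicity on projectives, and conclude by a standard bicomplex argument---matches the structure of the paper's proof. But the step you yourself flag as ``where the invariant theory does the real work,'' namely the vanishing $H_s(\dcx_\bullet(\Sigma^t\cala))=0$ for $s\geq 1$, is exactly the step your proposal does not carry out, and the method you propose for it (an explicit contracting homotopy realising $\dcx_\bullet(\cala\cdot[n])$ as a Koszul-type resolution) is not the paper's and is not obviously tractable at odd primes. The paper is organised precisely to avoid that computation. Its engine is the natural transformation $\rho_s\colon\rfunc_s M\to\Sigma^{-2}\Phi\Sigma\rfunc_{s-1}M$, which assembles into a short exact sequence of chain complexes
\[
0\to\Sigma^{-1}\dcx_\bullet\Sigma M\to\dcx_\bullet M\to\Sigma^{-1}\Phi\,\dcx_{\bullet-1}\Sigma M\to 0;
\]
for $M=\Sigma^t\cala$ the degree-zero connecting morphism is identified with a desuspension of $\lambda\colon\Phi F(t+1)\to F(t+1)$, whose injectivity gives a surjection $\Sigma^{-1}\dhom_s(\Sigma^{t+1}\cala)\twoheadrightarrow\dhom_s(\Sigma^t\cala)$, and this is played off against the connectivity bound $|\dhom_s M|\geq 1+p^s(|M|+s-1)$ to force the vanishing by induction on $s$. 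Without either this mechanism or a worked-out homotopy, your acyclicity claim is an assertion rather than a proof.

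Two smaller imprecisions. First, $\dcx_s M$ is not an ordinary tensor product ``$\rfunc_s\otimes M$ with a twisted action'': for a general $\cala$-module $M$ (not bounded above) the total Steenrod power is an infinite series, so $\rfunc_s M$ must be cut out inside the half-completed tensor product $\Philocal_s\largetensor M$, and this is the source of the weak-continuity and weak-connectivity bookkeeping in the paper---it cannot be elided (though your exactness claim for each $\dcx_s$ survives, since at the level of graded vector spaces $\rfunc_s M$ is free over the relevant Mui algebra). Second, your parenthetical ``once one verifies that each $H_s$ lands in unstable modules'' reverses the logic: that $H_s(\dcx_\bullet M)$ is unstable for $s>0$ is an a posteriori consequence of the theorem, not something checked in advance, and the bicomplex comparison does not require it.
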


The proof that the chain complex $\dcx_\bullet M$ calculates the derived
functors of destabilization is inspired by the
argument of Singer  \cite{singer_loops_II} for the case of the derived functors
of iterated loop functors and avoids
explicit calculation by using a connectivity argument for the chain complex
(similar to an argument used by Zarati
\cite{zarati_these}). This recovers the results of Zarati and the method also
applies at the prime two.

As is clear from Singer's work, underlying these methods is the fact that the
Steenrod algebra is a quadratic algebra
which is non-homogeneous Koszul (in the terminology of Priddy \cite{priddy}).
The destabilization functor appears by
exploiting the total Steenrod power, thus going back to the very origins of
instability.

Structural results on the derived functors of destabilization can be deduced
using these methods.
Recall that the Dickson invariants (the polynomial part of $H^*
(BV_s) ^{GL_s} $ for $p$ odd) identifies as the polynomial algebra
$\field_p [Q_{s,0}, \ldots , Q_{s,s-1}]$ and that, if $\Phi : \amod \rightarrow
\amod$ denotes the Frobenius functor on
the category  $\amod$, then $\Phi^k \field_p [Q_{s,0}, \ldots , Q_{s,s-1}]$
identifies with the subalgebra
$\field_p [Q_{s,0}^{p^k}, \ldots , Q_{s,s-1}^{p^k}]$.

\begin{THM}
\label{THM:module_structures}
For $M \in \obj \unst$ and  $s, t, w\in \nat$ such that $p^w \geq \big[ 
\frac{t-s+1}{2} \big]$,
 $\destab_s (\Sigma^{-t} M)$ is equipped with a natural $\Phi^{w+1}\field_p
[Q_{s,0}, \ldots ,
Q_{s,s-1}]$-structure in  $\unst$.

If $t \leq s$,  then $\destab_s (\Sigma^{-t}M ) $ has a natural $ \field_p
[Q_{s,0}, \ldots, Q_{s,s-1}]$-module
structure in $\unst$.
\end{THM}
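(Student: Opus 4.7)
My approach is to exploit the explicit chain complex $\dcx_\bullet M$ provided by Theorem~\ref{THM:chain_complex}. From the construction, which follows Hung and Sum's odd-primary generalization of Singer's invariant-theoretic approach, the term $\dcx_s M$ is built from a tensor factor involving $H^*(BV_s)^{\sltilde_s}$ and hence carries a natural action, via multiplication in $H^*(BV_s)^{\sltilde_s}$, of the Dickson polynomial subalgebra $\field_p[Q_{s,0},\ldots,Q_{s,s-1}]$. The first step is to verify that this multiplicative action commutes with the differentials of $\dcx_\bullet M$ (the face $\dcx_s M \to \dcx_{s-1} M$ being compatible with the corresponding restriction of Dickson classes, so that products with $Q_{s,i}$ are sent to zero on the $(s-1)$-st term). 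Granting this, the action descends to $\destab_s M \cong H_s(\dcx_\bullet M)$, producing the desired module structure in $\amod$; the content of the theorem is the upgrade to a module structure in $\unst$.

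The second step is a connectivity analysis. Since $M$ is unstable it is concentrated in non-negative degrees, so $\Sigma^{-t}M$ is concentrated in degrees $\geq -t$; on the other hand, the $s$-th term of the complex is built from classes in $H^*(BV_s)^{\sltilde_s}$ which contributes a positive internal shift growing with $s$. Combining these gives an effective connectivity bound on the cycles of $\dcx_s(\Sigma^{-t} M)$, and hence on classes of $\destab_s(\Sigma^{-t}M)$, which depends essentially on the difference $t-s$. I would then recall that multiplication by a cohomology class of degree $d$ in an unstable module defines an unstable $\cala$-linear operation precisely when $d$ is at most (a linear function of) the degree of the class being multiplied. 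Since $Q_{s,i}^{p^{w+1}}$ has cohomological degree $2 p^{w+1}(p^s - p^i)$, comparing the degree of $Q_{s,i}^{p^{w+1}} \cdot x$ with the excess of any Steenrod power one wishes to apply should give the bound $p^w \geq [(t-s+1)/2]$, with the factor of two reflecting the fact that at odd primes the $j$-th reduced power contributes degree $2j(p-1)$; verifying that the worst case among $i = 0, \ldots, s-1$ produces precisely this bound is the combinatorial heart of the argument.

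For the sharper statement when $t \leq s$, the connectivity of the cycles in $\dcx_s(\Sigma^{-t}M)$ improves enough that multiplication by each undecorated $Q_{s,i}$ already defines an unstable operation, so no Frobenius twist is needed; this extends the result of Lannes-Zarati \cite{lannes_zarati_deriv_destab} to odd primes. The main obstacle I anticipate is the degree book-keeping in the second step: one must keep careful track of the contributions of the exterior Mui generators (present at odd primes but absent at $p=2$), confirm that they are compatible with and do not obstruct the polynomial Dickson action, and then extract the precise numerical bound $p^w \geq [(t-s+1)/2]$ by matching the connectivity estimate for $\destab_s(\Sigma^{-t}M)$ against the instability condition imposed by the generators $Q_{s,i}$.
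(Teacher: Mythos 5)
Your scaffolding is right — the paper does prove this by putting a $\field_p[Q_{s,0},\ldots,Q_{s,s-1}]$-module structure on the terms of $\dcx_\bullet(\Sigma^{-t}M)$ (via the $K_s^+$-module structure on $\rfunc_s$ of Proposition~\ref{prop:rfunc1_mult_properties}) and then descending to the homology $\destab_s(\Sigma^{-t}M)\cong H_s(\dcx_\bullet(\Sigma^{-t}M))$. But the mechanism you propose for where the Frobenius twist $\Phi^{w+1}$ comes from is not the one that works, and I think it would not get you to a proof.

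You suggest that the issue is an instability constraint on the multiplication map itself, invoking a principle that ``multiplication by a class of degree $d$ in an unstable module is an unstable $\cala$-linear operation precisely when $d$ is at most (a linear function of) the degree being multiplied.'' That is not how module structures over unstable algebras behave: a $K$-module structure in $\amod$ with $K$ an unstable algebra is by definition $\cala$-linear via the Cartan formula regardless of degrees, and if the module happens to be unstable the structure is automatically one in $\unst$ (the forgetful functor $K\dash\unst\to K\dash\amod$ is fully faithful on unstable objects). So if the homology \emph{did} inherit a plain $\field_p[Q_{s,j}]$-module structure in $\amod$, the passage to $\unst$ would be free and there would be nothing to prove. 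The real obstruction is upstream: the differential $d_s:\rfunc_s(\Sigma^{-t}M)\to\rfunc_{s-1}(\Sigma^{-(t+1)}M)$ is \emph{not} $\field_p[Q_{s,j}]$-linear. The paper's Proposition~\ref{prop:diff_linearity} shows it is only $\Phi^w\field_p[Q_{s,j}]$-linear, and the proof is algebraic rather than a connectivity/instability argument: it writes $d_s$ via $\psi_{s-1,1}$ and $\partial_1$ (Proposition~\ref{prop:diff_d_partial}), computes $\psi_{s-1,1}(Q_{s,j}^{p^w})$ explicitly (Proposition~\ref{prop:calculate_psi_s-1_1}), and observes that the ``error'' terms carrying a factor $Q_{1,0}^{p^w}$ are killed by $\partial_1$ \emph{provided} the image of $\rfunc_s(\Sigma^{-t}M)$, after multiplying by $Q_{1,0}^{p^w}$, lands in $\Gamma_{s-1}\otimes H^*(BV_1)^{GL_1}\otimes\Sigma^{-t}M$. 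That is exactly what Lemma~\ref{lem:Q^u_kill_fractions} delivers for $p^w\geq u:=[\tfrac{t+1}{2}]$, and it is here — not in an instability bound on $Q_{s,i}^{p^{w+1}}\cdot x$ — that the numerical condition arises (from the fact that $Q_{s,0}^u$ clears the negative powers of $Q_{s,0}$, since $\Sigma^{2u}\Sigma^{-t}M$ is again unstable). Your ``connectivity'' intuition is gesturing at this, but the actual constraint is on what $\partial_1$ annihilates, not on which Steenrod powers are admissible.

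You also miss the second, independent source of Frobenius: passing from the linearity of $d_s$ (which gives the kernel a $\Phi^w$-structure) to the statement about homology requires comparing the $\field_p[Q_{s+1,j}]$-structure on $\rfunc_{s+1}$ with the $\field_p[Q_{s,j}]$-structure on $\rfunc_s$. The comparison map is $\phi_{s+1}:\field_p[Q_{s+1,j}]\twoheadrightarrow\Phi\,\field_p[Q_{s,j}]$ from Lemma~\ref{lem:dickson_phi_s}, and this projection contributes the extra $\Phi$ that turns $\Phi^w$ into $\Phi^{w+1}$. Without this step you cannot conclude that the image of $d_{s+1}$ is a sub-$\Phi^{w+1}\field_p[Q_{s,j}]$-module, and the quotient structure on homology does not follow. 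Finally, the ``$t\leq s$'' case does not come from an improved connectivity estimate: it comes from $d_{s+1}$ being the zero map (Corollary~\ref{cor:unstable_rs}, since $\Sigma^{s-t}M$ is then unstable), so the additional twist from $\phi_{s+1}$ simply does not enter.
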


This result allows the very rich structure of unstable modules over Noetherian
unstable algebras to be brought to bear; analogous results were obtained for
derived functors of iterated loop functors (at the prime two) in
\cite{powell_mod_loop}.

%%%%%%%%%%%%%%%%%%%%%%%%%%%%%%%%%%%%%%%%%%%%%%%%%%%%%%%%%%%%%%%%%%%%%%%%%%
Since the current work was made available \cite{pow_destab}, Kuhn and McCarty
\cite{arXiv:1109.3694} have given a derivation of a
chain complex (working with homology) for calculating the corresponding
functors $\Omega^\infty_s$ at the prime two. Their construction of the chain
complex uses the topological origin of the Singer functors, thus relying on
classical calculations of the homology of infinite loop spaces in terms of the
Dyer-Lashof algebra to avoid the algebraic input used in this paper (but for
$p=2$). The proof that the chain complex calculates the derived functors of
destabilization follows the strategy of this work.

\bigskip
%%%%%%%%%%%%%%%%%%%%%%%%%%%%%%%%%%%%%%%%%%%%%%%%%%%%%%%%%%%%%%%%%%%%%%%%%%
{\bf Outline of the proof:}
The construction of the chain complex begins with the definition of an exact
functor $\rfunc_1 :
\amod \rightarrow \amod$, given explicitly as a subfunctor of $M \mapsto H^*
(BV_1) [Q_{1,0}^{-1}] \largetensor M$,
where  $\largetensor $ is a half-completed tensor product. This is achieved by
first constructing a larger functor
$\rfunctilde_1$ and then restricting using an eigenspace decomposition
associated to an action
by the group $\zed/2$. The functor $\rfunc_1$ is a generalization of the functor
$R_1$ defined by Zarati \cite{zarati_these} on the category of unstable modules;
there is
a technical difficulty introduced by passage to the category of $\cala$-modules,
since the functor $\rfunc_1$ does not commute with inverse limits.

There are higher functors $\rfunc_s$, $s \in \nat$, which are constructed by
iteration and using a restriction induced by the definition of $\rfunc_2$; this
corresponds to the quadratic nature of $\rfunc_\bullet$.
Namely, $\rfunc_2 \subset \rfunc_1 \rfunc_1$ is
constructed and,
for $s \geq 2$, the functor $\rfunc_s$ can be defined as
$\bigcap_{i+j = s-2} \rfunc_1^{\circ i} \circ \rfunc_2 \circ \rfunc_1 ^{\circ
j}$ as subfunctors of $\rfunc_1^{\circ s}$. To compare with the work of Zarati
\cite{zarati_these} and Hung and Sum \cite{hung_sum}, the functors $\rfunc_s$
are identified as subfunctors of $M \mapsto H^* (BV_s) [Q_{s,0}^{-1}]
\largetensor M$; the presence of the half-completed tensor product involves a
number of unavoidable technicalities. 

The differential of the chain complex is induced by Singer's $\cala$-linear
morphism $H^* (BV_1) [Q_{1,0}^{-1}]  \rightarrow \Sigma^{-1} \field_p$, which
gives  a natural transformation $\rfunc_1 M \rightarrow \Sigma^{-1} M$. The
definition of  $\rfunc_1 M$ ensures that  the cokernel of this morphism is the
module $\Sigma^{-1} \destab M$. The higher differentials are induced by using
the canonical inclusions $\rfunc_s \subset \rfunc_{s-1} \rfunc_1$, as the
composites
\[
 \rfunc_s (\Sigma ^{s-1} M)
\hookrightarrow 
\rfunc_ {s-1} \rfunc_1 \Sigma^{s-1} M
\stackrel{\rfunc_1 d_{\Sigma^{s-1}M}}{\longrightarrow}
\rfunc_{s-1} \Sigma^{s-1}M, 
\]
giving rise to a chain complex
\[
 \ldots
\rightarrow
\Sigma \rfunc_s \Sigma^{s-1} M
\rightarrow
\ldots
\rightarrow
\Sigma \rfunc_2 \Sigma M
\rightarrow
\Sigma \rfunc_1 M
\rightarrow
M.
\]
The proof that this is a chain complex, reduces to showing that the composite
$\Sigma \rfunc_2 \Sigma M
\rightarrow
\Sigma \rfunc_1 M
\rightarrow
M$ is trivial; this is a consequence of the relationship between the Steenrod
algebra and invariant theory, highlighted (at the prime $2$) by Singer
\cite{singer_invt_lambda}.

A formal argument is used to show that the   homology of the chain complex
calculates the derived functors of destabilization, reflecting the behaviour of
the chain complex with respect to suspension. The essential tool is the natural
transformation
\[
 \rho_1 : \rfunc_1 M \rightarrow \Sigma^{-2} \Phi \Sigma M,
\]
introduced by Zarati in the unstable module setting. This has higher analogues,
which induce a morphism of  chain complexes. Using this, the proof
proceeds by showing vanishing of the higher homology on the projective
generators
of the category $\amod$. This uses a connectivity argument, reminiscent of that
used by Zarati.

\bigskip{\bf Organization of the paper:}
Sections \ref{sect:prelim}, \ref{sect:weakcts}, \ref{sect:large} provide
background and some technical tools, in particular the notions of weak
continuity and connectivity which palliate the non-continuity of the functors
considered. The construction of the functors $\rfunc_s$ is carried out in
Sections \ref{sect:rfunc1}, \ref{sect:rfunctors}, \ref{sect:ks}, with the chain
complex appearing in Section \ref{sect:chain_complex}. The short exact sequence
of chain complexes is introduced in Section \ref{sect:ses} and these ingredients
are put together in Section \ref{sect:proof} to prove Theorem
\ref{THM:chain_complex}. Theorem \ref{THM:module_structures} is proved in
Section \ref{sect:module}.

\section{Preliminaries}
\label{sect:prelim}

The Steenrod algebra over a fixed odd prime $p$ is denoted $\cala$. The category
 $\amod$ of graded $\cala$-modules
contains the category $\unst$ of unstable modules as a full subcategory;  note
that, whereas $\cala$-modules are $\zed$-graded, the instability condition
implies that unstable modules are $\nat$-graded. 
 The categories $\amod$ and $\unst$ are abelian, complete and cocomplete (see
\cite{schwartz_book}). 
 
%%%%%%%%%%%%%%%%%%%%%%%%%%%%%%%%%%%%%%%%%%%%%%%%%%%%%%%%%%%%%%
\subsection{The destabilization functor}
\label{subsect:destab_functor}

The destabilization functor $\destab : \amod \rightarrow \unst$ is the left
adjoint to the inclusion $\unst
\hookrightarrow \amod $; it is right exact and admits left derived functors
$\destab_ s$, $s \geq
0$. The functor $\destab $ is described explicitly as follows: for $M$  an
$\cala$-module,  the subspace $BM : = \langle \beta ^\epsilon P^i x \ |\
\epsilon +
2i > |x|, \  \epsilon \in \{0,1 \}\rangle $ is stable under the action of the
Steenrod algebra and $\destab M \cong M / BM$.

%%%%%%%%%%%%%%%%%%%%%%%%%%%%%%%%%%%%%%%%%%%%%%%%%%%%%%%%%%%%%%%
\subsection{Algebras and modules}

The categories $\amod$ and $\unst$ are symmetric monoidal with respect to the
tensor product of graded vector spaces and symmetry using the Koszul sign
convention. Unital commutative algebras in $\amod$ with respect to this
structure are referred to here simply as algebras; an  algebra $K$ in $\amod$ is
unstable if the underlying $\cala$-module is unstable and the Cartan condition
is satisfied (if $k$ is an element of even degree, $P^{|k|/2} k = k^p$). For $B$
an algebra in $\amod$, 
a $B$-module in $\amod$ is an $\cala$-module equipped with a $B$-module
structure
such that the structure morphism is  $\cala$-linear; the category of $B$-modules
in $\amod$ is denoted by $B\dash \amod$. Similarly, if $K$ is an unstable
algebra, the category of $K$-modules in $\unst$ is denoted by $K\dash \unst$, so
there is a forgetful functor $K\dash\unst \rightarrow K\dash\amod$. These
categories are both abelian.

%%%%%%%%%%%%%%%%%%%%%%%%%%%%%%%%%%%%%%%%%%%%%%%%%%%%%%%%%%%%%%%%%
\subsection{The Frobenius functor, $\Phi$}

The exact functor $\Phi : \amod \rightarrow \amod$ is the analogue for odd
primes of the familiar doubling functor for
$p=2$:
\[
(\Phi M)^n =
\left \{
\begin{array}{ll}
M^{2k} & n = 2 k p \\
M^{2k+1} & n = 2kp + 2\\
0 & \mathrm{otherwise.}
\end{array}
\right.
\]
In particular, $\Phi M $ is concentrated in even degrees and an element $x \in
M^d$ gives rise to $\Phi x \in M^n$,
where $n = pd$ if $d$ is even and $n= p (d-1) +2 $ if $d$ is odd. The action of
the Steenrod algebra is given by
\[
\begin{array}{lllll}
  P^i (\Phi x) &= &\Phi (P^{i/p} x) & p | i , & |x | \equiv 0 (2)  \\
P^i (\Phi x)&  =& \Phi (\beta P^{(i-1)/p} x) & p |i -1, & |x | \equiv 1 (2) \\
\beta (\Phi x)& =& 0.
\end{array}
\]

For $M \in \obj \amod$, there is a natural $\field_p$-linear morphism
$
\lambda_M :
\Phi M \rightarrow M
$
defined by $\lambda_M (\Phi x ) := \beta^\epsilon P^i x$, where $|x|= 2i +
\epsilon$ and $\epsilon \in \{0,1
\}$. If $M$ is unstable, then
$\lambda_M$ is $\cala$-linear.

\begin{remark}
For $M \in \obj \amod$, the canonical surjection $M \twoheadrightarrow
\destab M$ induces a surjection
$\destab \Phi M \twoheadrightarrow \Phi \destab M$.
For $p$ odd, this is not in general an isomorphism; for example, consider the
free unstable module $F(1)$ on a
generator of degree one.
\end{remark}

%%%%%%%%%%%%%%%%%%%%%%%%%%%%%%%%%%%%%%%%%%%%%%%%%%%%%%%%%%%%%%%
\subsection{Cohomology of elementary abelian $p$-groups and invariants}

For $V$ an elementary abelian $p$-group, $H^* (BV)$ denotes the
$\field_p$-cohomology of the classifying space
$BV$; $V_s$ will denote an elementary abelian $p$-group
of rank $s\in \nat$. There is an isomorphism
of unstable  algebras $H^* (BV_1) \cong \Lambda (x) \otimes \field_p [y]$, where
$\Lambda (-)$ denotes the exterior
algebra functor, $|x| = 1 $,  $|y| = 2$ and the generators are linked by the
Bockstein operator $\beta x =y$. There is a Künneth
isomorphism
$
H^* (BV_s )
\cong
\Lambda (x_1, \ldots , x_s ) \otimes \field_ p [y_1, \ldots , y_s]
$
 and  $\field_p[y_1, \ldots, y_s]  \subset H^* (BV_s)$ is a sub unstable
algebra.

The general linear group $GL_s := GL(V_s) $ acts naturally on $H^* (BV_s)$ by
morphisms of unstable algebras and
$\field_p[y_1, \ldots , y_s]$ is stable under this action. In particular, for 
$G \subset GL _s$,  the algebras of invariants 
$ 
\field _p [y_1, \ldots , y_s] ^G \hookrightarrow H^* (BV_s)^G
\hookrightarrow
H^* (BV_s)
$ 
are unstable algebras.

\begin{remark}
Below and in Section \ref{subsect:mui}, the classical approach to the
Dickson-M{\`u}i invariants is reviewed for the
convenience of the reader, who may also wish to consult the paper by Kameko and
Mimura \cite{kameko_mimura}, which gives a presentation using cohomology
operations.
\end{remark}

The Dickson invariants $\field _p [y_1, \ldots , y_s] ^{GL_s}$ form a polynomial
algebra 
 $$
\field_p [Q_{s,0}, Q_{s, 1},\ldots, Q_{s, s-1} ],
$$
where $|Q_{s,i}|=2 ( p^s -p^i)$. The generators $Q_{s,i}$ are defined by
\[
f_s (X) := \prod_{y \in V_s^*} (X - y)
= \sum_{i=0}^s
(-1)^{s-i}Q_{s,i} X^{p^i}.
\]
In particular, the top Dickson invariant, $Q_{s,0}$, is $ (-1)^s \prod_{y \in
V_s ^* \backslash 0 } y$.

\begin{nota}
	For $0< s \in \nat$, the Dickson invariants $\field _p [y_1, \ldots ,
y_s] ^{GL_s}$ will be abbreviated to $\field_p [Q_{s,i}]$.
\end{nota}

\begin{definition}
(Cf.  \cite{mui_cohom_operations}.)
	Let $\sltilde_s \subset GL_s$ denote the kernel of the morphism $GL_s
\twoheadrightarrow \zed/2$,
$g \mapsto  \det(g)^{\frac{p-1}{2}}$.
\end{definition}

The  morphism $GL_s \twoheadrightarrow \zed/2$ is a split surjection, leading
to:

\begin{lemma}
\label{lem:general_eigenspace}
Suppose $GL_s$ acts by morphisms of
unstable algebras on $K$. Then
\begin{enumerate}
 \item 
 $K^{\sltilde_s} \subset K$ inherits a  canonical $\zed/2$-action  and 
$(K^{\sltilde_s}) ^{\zed/2}\cong K^{GL_s}$;
\item
there
is a canonical splitting $
K ^{\sltilde_s} \cong K ^{GL_s} \oplus (K ^{\sltilde_s})^-
$ in  $K^{GL_s}\dash \unst$.
\end{enumerate}
\end{lemma}

%%%%%%%%%%%%%%%%%%%%%%%%%%%%%%%%%%%%%%%%%%%%%%%%%%%%%%%%%%%%%%%%%%%%%%%%%%%%%%%%
%%%%%%%%%%%%%%%%%%%%%%%%%%%%
\subsection{M{\`u}i invariants}
\label{subsect:mui}

Fix an ordered basis $\{ x_1, \ldots, x_s \}$ of $V_s^* \cong H^1 (BV_s)$,
giving an
ordered set $\{ y_1, \ldots, y_s \}$ of generators of the polynomial part of
$H^* (BV_s)$, where $\beta x_i = y_i$.

\begin{definition}
\label{def:invariants}
(Cf. \cite{mui_cohom_operations,mui_mod_invt_symm,hung_sum}.)
For integers $0 \leq i <s $, define:
\begin{enumerate}
 \item 
 \[L_s := 
 \left|
\begin{array}{ccc}
y_1 &\cdots&y_s \\
y_1^p & \cdots & y_s^p \\
\vdots & \cdots & \vdots \\
y_1 ^{p^{s-1} } & \cdots &y_s^{p^{s-1} }
\end{array}
\right|\] 
and 
 $\euler_s := L_s ^{\frac{p-1}{2}} $ of degrees $|L_s|=2
\Big(\frac{p^s -1} {p-1} \Big)$ and $|\euler_s|= p^s-1$;
 \item
 \[M_{s,i}:= 
 \left|
\begin{array}{cccc}
x_1 &\cdots& x_s  \\
y_1 & \cdots & y_s  \\
\vdots & \cdots & \vdots \\
y_1 ^{p^{s-1}} & \cdots & y_s^{p^{s-1}}
\end{array}
\right|,
\]
in which the row $\big( y_1 ^{p^i} \  \cdots \  y_s ^{p^i}\big)$ is omitted from
the array;
\item
$
\oddgen_{s,i} := M_{s,i} L_s ^{\frac{p-3}{2}}
$
of degree $|\oddgen_{s,i}|= p^s - 2 p^i$;
\item
$
		R_{s,i} := \oddgen_{s,i} \euler_s
	$
	of degree $|R_{s,i}|= 2 (p^s - p^i) -1$. 
\end{enumerate}
\end{definition}

\begin{proposition}
\cite{mui_cohom_operations}
\label{prop:GL_sltilde_invariance}
For integers  $0 \leq i < s$, 
\begin{enumerate}
\item
$L_s \in H^* (BV_s) ^{SL_s} $;
\item
$\euler_s, \oddgen_{s,i} \in H^* (BV_s) ^{\sltilde_s} $;
\item
$Q_{s,0}, R_{s,i} \in H^* (BV_s) ^{GL_s} $.
\end{enumerate}
Moreover, $\euler_s^2 = Q_{s,0}$.
\end{proposition}

\begin{example}
\label{exam:invariants_s=1}
There are identifications
\[
H^* (BV_1) ^{GL_1} \cong
\Lambda (\oddgen_{1, 0} \euler_1 ) \otimes \field_p [Q_{1,0}]
\hookrightarrow
\Lambda (\oddgen_{1,0}) \otimes \field_p [\euler_1]
\cong  H^* (BV_1) ^{\sltilde_1} ,
\]
where $\euler_1 = y^{\frac{p-1}{2}}$, $\oddgen_{1,0} = x y^{\frac{p-3}{2}}$ and
 $Q_{1,0}=  y^{p-1}$. Moreover, $(H^* (BV_1) ^{\sltilde_1})^-$ is
the free $\field_p [Q_{1,0}]$-module on generators $\oddgen_{1,0}$ and
$\euler_1$.
\end{example}

%%%%%%%%%%%%%%%%%%%%%%%%%%%%%%%%%%%%%%%%%%%%%%%%%%%%%%%%%%%%%%%%%
\subsection{Localization}

The interest of localizing $H^* (BV_s)$ by inverting the top Dickson invariant
$Q_{s,0}$ is well-established (cf.
\cite{wilkerson,singer_localization_modules}).

\begin{nota}
Denote by  $T_s \subset GL_s$ the $p$-Sylow subgroup of upper
triangular matrices with respect to the chosen ordered basis of $V_s^*$.
\end{nota}

Thus there are inclusions $T_s \subset SL_s \subset \sltilde_s \subset
GL_s$. The following algebras in $\amod$ play an important role (cf.
\cite{hung_sum}).

\begin{nota}
For $0< s\in \nat$, 
\begin{enumerate}
 \item 
 $\Philocal_s:= H^* (BV_s) [Q_{s,0}^{-1}]$;
\item
$\boldGamma_s:= \Philocal_s ^{GL_s} \cong (H^* (BV_s)
^{GL_s} ) [Q_{s,0}^{-1}]$;
\item
$\boldDelta_s:=\Philocal_s ^{T_s} \cong (H^* (BV_s)
^{T_s} ) [Q_{s,0}^{-1}]$.
\end{enumerate}
\end{nota}

There are inclusions of algebras in $\amod$: $\boldGamma_s \subset \boldDelta_s
 \subset \Philocal_s$. Clearly $\boldDelta_1 = \Philocal_1$.

\begin{proposition}
\label{prop:Gamma_Delta}
 For $0<s \in \nat$, there are isomorphisms of algebras
\begin{enumerate}
\item
 \cite[Corollary 1.2(ii)]{hung_sum}
$\boldGamma_s \cong \Lambda (R_{s,i}| 0 \leq i \leq s-1) \otimes \field_p
[Q_{s,i}| 0 \leq i \leq s-1][Q_{s,0}^{-1}]$;
	\item
	 \cite[Corollary 1.3]{hung_sum}
	 $\boldDelta_s \cong
\Lambda (u_1, \ldots , u_s) \otimes \field_p [v_1^{\pm 1}, \ldots , v_s^{\pm
1}]$, where
$|u_i|= 1$ and $|v_i|=2$.
\end{enumerate}
\end{proposition}

%%%%%%%%%%%%%%%%%%%%%%%%%%%%%%%%%%%%%%%%%%%%%%%%%%%%%%%%%%%%%%%%%%%%%%%%%%%%%%%%
%%%%
\subsection{On $\sltilde_s$-invariants} The following result introduces 
the unstable algebra which underlies the constructions of this paper and
identifies with the
algebra of $\sltilde_s$-invariants after inverting $Q_{s,0}$. Recall that the
regular representation $V_s \hookrightarrow \sym_{p^s}$ factors canonically
across the inclusion
$\alt_{p^s} \subset \sym_{p^s}$ of the alternating group in the symmetric group.

\begin{theorem}
\label{thm:sltildes_inv}
(\cite[I.4.14]{mui_mod_invt_symm} and \cite{mui_cohom_operations}.)
For $0< s \in \nat$, the elements $ \{ \oddgen_{s,i}|  0 \leq i \leq s-1 \}$ and
 $\{\euler_s, Q_{s,j} | 1 \leq j \leq s-1 \}$ generate a free graded commutative
algebra
\[
\Lambda (\oddgen_{s,i} ) \otimes \field _p [\euler_s, Q_{s,j} ] \subset
H^* (BV_s) ^{\sltilde_s},
\]
which is a sub unstable algebra, isomorphic to the image of the restriction
morphism
$
 H^* (B\alt_{p^s} ) \rightarrow H^* (BV_s).
$
This induces  an isomorphism
\[
 \Philocal_s ^{\sltilde_s}
\cong
\Lambda (\oddgen_{s,i}  ) \otimes \field _p [\euler_s, Q_{s,j} ][Q_{s,0}^{-1}].
\]
\end{theorem}

\section{Weak continuity and connectivity}
\label{sect:weakcts}

This section introduces technical conditions which palliate the failure of the
functors used in the 
constructions to be continuous (a functor is continuous if it commutes with
small limits).

%%%%%%%%%%%%%%%%%%%%%%%%%%%%%%%%%%%%%%%%%%%%%%%%%%%%%%%%%%%%%%%%%%%%%%%%%%%%%%%%
\subsection{Weakly continuous functors}

The category of inverse systems in $\calc$ is written $\calc^\natop$.
Write $\amodbd \subset \amod$ for the full subcategory of bounded-above
modules and, for  $M \in \obj \amod$ and $c \in \zed$, let $M ^{\geq c} \subset
M$ denote the sub
$\cala$-module of elements of degree at least $c$ and $M^{<c }$ the quotient
module $M / M^{\geq c}$.

The following resumes standard properties:

\begin{lemma}
\label{lem:truncation}
For $M \in \obj \amod$ and $c \in \zed$, 
\begin{enumerate}
	\item
$M \mapsto M^{\geq c}$ and $M \mapsto M^{<c}$ define exact
functors $(-)^{\geq c}$ , $(-)^{<c}$ on  $\amod$ and there is  a natural short
exact sequence
$
0
\rightarrow
M^{\geq c}
\rightarrow
M
\rightarrow
M^{< c}
\rightarrow
0
$
in $\amod$.
\item
The modules $M^{<c}$ define a functorial inverse system of surjections $
 \ldots \twoheadrightarrow M^{< c +1 } \twoheadrightarrow M^{< c}
\twoheadrightarrow \ldots $
in $\amodbd$.
\item
The inverse limit induces a functor
$\lim_\leftarrow
:
(\amodbd) ^\natop
\rightarrow
\amod$
and the canonical morphism $M \rightarrow \lim_\leftarrow (M ^{<c})$ is an
isomorphism in $\amod$.
 \end{enumerate}
\end{lemma}

\begin{nota}
	\
	\begin{enumerate}
		\item
		For $\alpha : \amod \rightarrow \amod$ a functor, let $\alpha
\bd : \amodbd \rightarrow \amod$ denote
the restriction of $\alpha$ to $\amodbd$.
\item
For $\overline{\beta}: \amodbd \rightarrow \amod$ a functor, let $\rkan
\overline{\beta}: \amod \rightarrow \amod$
denote the right Kan extension of $\overline{\beta}$, given by
$
	\rkan \overline{\beta} (M) := \lim_\leftarrow \overline{\beta} (M^{<c})
.$
	\end{enumerate}
\end{nota}

\begin{definition}
	A functor $\alpha : \amod \rightarrow \amod$ is weakly continuous if the
canonical natural transformation
	 $
		\alpha \rightarrow \rkan (\alpha \bd)
	 $
	is an isomorphism.
\end{definition}

\begin{example}
 A continuous functor $\alpha : \amod \rightarrow \amod$ is clearly weakly
continuous; the basic example of a weakly continuous functor which is not
continuous arises from the large tensor product reviewed in Section
\ref{sect:large} (see Example \ref{exam:weak_not_strong}).
\end{example}

\begin{proposition}
\label{prop:properties_weak_cts}
	Let $\overline{\alpha}, \overline{\beta}, \overline{\gamma}$ be functors
$\amodbd \rightarrow \amod$. Then
	\begin{enumerate}
		\item
if $\overline{\alpha}$ is exact,  $\rkan \overline{\alpha}$ is exact;
\item
if $0 \rightarrow \overline{\alpha}\rightarrow \overline{\beta}\rightarrow 
\overline{\gamma} \rightarrow 0$ is a short
exact sequence of exact functors, then
$$
0 \rightarrow \rkan \overline{\alpha}\rightarrow 
\rkan\overline{\beta}\rightarrow   \rkan\overline{\gamma} \rightarrow
0
$$
		is a short exact sequence of functors;
		\item
		there is a bijection of sets of natural transformations:
		\[
			\mathrm{Nat} (\overline{\alpha}, \overline{\beta} )
			\cong
			\mathrm{Nat} (\rkan \overline{\alpha},
\rkan\overline{\beta} ).
		\]
	\end{enumerate}
\end{proposition}

\begin{proof}
	If $0 \rightarrow A \rightarrow B \rightarrow C\rightarrow 0$ is a short
exact sequence of $\cala$-modules, then
there is an induced short exact sequence
$
	0 \rightarrow A^{<\bullet} \rightarrow B^{<\bullet} \rightarrow
C^{<\bullet}\rightarrow 0
$
of inverse systems of bounded above $\cala$-modules in which the transition
morphisms of the inverse systems are
surjective. Applying $\overline{\alpha}$ gives a short exact sequence of inverse
systems
$
	0 \rightarrow
	\overline{\alpha} (A^{<\bullet}) \rightarrow
	\overline{\alpha} (B^{<\bullet}) \rightarrow
	\overline{\alpha} (C^{<\bullet})\rightarrow 0
$ 
such that the transition morphisms are surjective in each of the inverse
systems, since $\overline{\alpha}$ is exact by
hypothesis. The first statement follows by Mittag-Leffler.

The proof of the second statement is similar and the final statement follows
 from the properties of the right Kan extension.
\end{proof}

%%%%%%%%%%%%%%%%%%%%%%%%%%%%%%%%%%%%%%%%%%%%%%%%%%%%%%%%%%%%%%%%%%%%%%%
\subsection{Stable inverse systems}

Since a weakly continuous functor need not be continuous, it is useful to
place a restriction on the
inverse systems which are considered.

\begin{definition}
	An inverse system $X _\bullet$ of $\amod^\natop$ is stable if there
exists a map $\mu : \nat
\rightarrow \nat$ such that
	\begin{enumerate}
	\item
		$\lim _{s \rightarrow \infty } \mu (s) = \infty$.
	\item
		$X_{s+1}^{<  \mu(s)} \rightarrow X_s ^{< \mu(s)} $ is an
isomorphism, $\forall s \in \nat$;

	\end{enumerate}
\end{definition}

\begin{lemma}
\label{lem:stable_weak_continuity}
	Let $\alpha : \amod \rightarrow \amod$ be a weakly continuous functor,
$X_\bullet $ be a stable inverse system
of $\cala$-modules and write $X _\infty$ for $\lim_\leftarrow X_\bullet$. Then
the natural morphism
	$
		\alpha (X_\infty)
		\rightarrow
		\lim _{\leftarrow i} \alpha (X_i)
	$
is an isomorphism.
\end{lemma}

\begin{proof}
	Since $\alpha$ is weakly continuous,  $\alpha(X_i) \cong
\lim_{\leftarrow c}
\alpha(X_i^{<c}) $, hence 
	\[
		\lim_{\leftarrow i} \alpha (X_i)
		\cong
		\lim _{\leftarrow i} \lim_{\leftarrow c} \alpha (X_i^{<c})
		\cong
		\lim _{\leftarrow c} \lim_{\leftarrow i} \alpha (X_i^{<c}),
	\]
by reversing the order of the inverse limits. For fixed $c$, the stability
hypothesis  implies
that $X_i ^{<c}$ is isomorphic to $X_\infty ^{<c}$ for $i \gg 0$, hence $
\lim_{\leftarrow i} \alpha (X_i^{<c}) \cong
\alpha (X_\infty^{<c})$. Finally, since $\alpha$ is weakly continuous,   $\alpha
(X_\infty) \cong \lim_{\leftarrow  c} \alpha (X_\infty^{<c})$.
\end{proof}

%%%%%%%%%%%%%%%%%%%%%%%%%%%%%%%%%%%%%%%%%%%%%%%%%%%%%%%%%%%%%%%%%%%%%%%%%%%%%%%%
%%%%%%
\subsection{Weak connectivity for functors}
As usual, write $|M| := \mathrm{inf} \{ d
| M_d\neq 0 \}  \in \zed \cup \{ - \infty , \infty \}$ for 
the  connectivity of $M \in \obj \amod$.

\begin{definition}
	A functor $\alpha: \amod \rightarrow \amod$ is weakly connective if
there exists a function $\kappa : \zed
\rightarrow \zed$ such that
	\begin{enumerate}
	\item
	$\lim_{t \rightarrow \infty}
	\kappa (t)= \infty$;
		\item
	 $|\alpha (M) | \geq \kappa (|M|)$ if $|M|\in \zed$.	
	\end{enumerate}
If the above conditions are satisfied, $\alpha$ is said to be
$\kappa$-connective 
\end{definition}

\begin{lemma}
\label{lem:kappa-connectivity_converge}
	Let $\alpha : \amod \rightarrow \amod$ be an exact, $\kappa$-connective
functor. Then the natural morphism
	$
		\alpha (M) ^{<\kappa (c) }
		\rightarrow
		\alpha (M^{<c} ) ^{< \kappa (c)}
	$
is an isomorphism; in particular, the inverse system $\alpha(M^{< \bullet})$ is
stable.
\end{lemma}

\begin{proof}
	Straightforward.
\end{proof}

\begin{proposition}
\label{prop:composition_weakly_cts_connective}
	Let $\beta, \gamma$ be weakly continuous functors, such that $\gamma$ is
exact and $\kappa$-connective. Then the
composite $\beta \circ \gamma$ is weakly continuous. Moreover
	\begin{enumerate}
		\item
		if $\beta$ is exact, then $\beta \circ \gamma$ is exact;
		\item
		if $\beta$ is $\lambda$-connective, then $\beta \circ \gamma $
is $\lambda \circ \kappa$-connective.
	\end{enumerate}
\end{proposition}

\begin{proof}
Consider an $\cala$-module $M$. The hypothesis that $\gamma$ is weakly
continuous implies that $\gamma(M) \cong
\lim_{\leftarrow  c} \gamma (M^{<c})$ and  Lemma
\ref{lem:kappa-connectivity_converge} implies that $\gamma (M^{<\bullet}) $ is
stable.
Thus $\beta \circ \gamma (M) \cong  \beta ( \lim_{\leftarrow  c} \gamma
(M^{<c}))$ and, by Lemma 
\ref{lem:stable_weak_continuity}, the right hand side is isomorphic to $\lim
_{\leftarrow  c} \beta \circ \gamma
(M^{<c})$, since $\beta$ is weakly continuous. Hence, $\beta \circ \gamma$ is
weakly continuous; moreover, if $\beta$ is exact, then so is  $\beta \circ
\gamma$. The  final statement on the connectivity is clear.
\end{proof}

Examples of weakly connective functors are provided by the derived functors of
destabilization (see Corollary \ref{cor:stronger_connectivity}).

\section{Large tensor products}
\label{sect:large}

As in the work of Singer (eg. \cite{singer_new_chain_cx}), Lin and Singer
\cite{li_singer} and Hung and Sum \cite{hung_sum}, it is necessary to use large
tensor products when working with arbitrary modules over the Steenrod algebra.

%%%%%%%%%%%%%%%%%%%%%%%%%%%%%%%%%%%%%%%%%%%%%%%%%%%%%%%%%%%%%%%%%
\subsection{Large tensor products}

\begin{definition}
For $M, N  \in \obj \amod$, define $M
\largetensor N:= 
	  \lim_{\leftarrow c} (M \otimes N ^{<c}).
$
\end{definition}

\begin{proposition}
\label{prop:largetensor_properties}
	Let $M, N$ be $\cala$-modules.
\begin{enumerate}
	\item
There is a canonical embedding
$
  M \otimes N
\hookrightarrow
M \largetensor N,
$
which is an isomorphism if $N$ is bounded above or if $M$ is bounded below.
\item
The association $N \mapsto M \largetensor N$ defines an exact functor
$
	M \largetensor - : \amod \rightarrow \amod,
$
which identifies with $\rkan \Big((M \otimes - ) \bd \Big )$, in particular is
weakly continuous.
\item
The association $M \mapsto M \largetensor N$ defines an exact functor
$
	- \largetensor N : \amod \rightarrow \amod.
$
\item
The direct system $N_{\geq \bullet}$ induces an isomorphism
$
\lim_{d\rightarrow -\infty} M \largetensor (N_{\geq d})
\stackrel{\cong}{\rightarrow}
M \largetensor N.
$

\end{enumerate}
\end{proposition}

\begin{proof}
The first statement is clear, as is the identification of $M \largetensor -$
with the right Kan extension of $(M \otimes -)|_{\amodbd}$; exactness follows
from Proposition \ref{prop:properties_weak_cts}. Similarly,  the third statement
follows from the argument of Proposition \ref{prop:properties_weak_cts}.

The final statement is straightforward.
\end{proof}

\begin{example}
\label{exam:weak_not_strong}
The functor $\Philocal_1 \largetensor -$ is weakly continuous but not
continuous.
\end{example}

%%%%%%%%%%%%%%%%%%%%%%%%%%%%%%%%%%%%%%%%%%%%%%%%%%%
%%%%%%%%%%%%%%%%%%%%%%%%%%%%%
\subsection{Multiplicative structures}

For $M, N \in \obj \amod$, there is a natural surjection:
\begin{eqnarray}
\label{eqn:HBV_tensor}
\xymatrix{
\big( H^* (BV_s) \otimes M \big) \otimes \big(H^* (BV_s) \otimes N\big)
\ar[rr]
&&
H^* (BV_s) \otimes (M \otimes N),
}
\end{eqnarray}
induced by passage to the tensor product in $H^* (BV_s)\dash \unst$.

\begin{proposition}
\label{prop:bullet-product}
For $M, N \in \obj \amod$ and $s\in \nat$, there is a product morphism
\[
(\Philocal_s \largetensor M) \otimes (\Philocal_s \largetensor N)
\stackrel{\bullet}{\rightarrow}
\Philocal_s \largetensor (M \otimes N)
\]
in $\amod$ which extends (\ref{eqn:HBV_tensor}).
Hence,
\begin{enumerate}
\item
$\Philocal_s \largetensor M$ is a $\Philocal_s$-module in $\amod$;
\item
if $B$ is an algebra in $\amod$, then $\Philocal_s \largetensor B$ is a
$(\Philocal_s \otimes B)$-algebra in $\amod$;
\item
if, furthermore, $N \in \obj B\dash\amod$, then $\Philocal_s \largetensor
N \in \obj \Philocal_s \largetensor
B \dash\amod$.
\end{enumerate}
\end{proposition}

\begin{proof}
The result is clear with $\otimes$ in place of $\largetensor$; it is necessary
to verify that  this passes to
$\largetensor$. Using the isomorphism $\lim_{d \rightarrow -\infty}X
\largetensor ( Y_{\geq d}) \cong X \largetensor Y$
of Proposition \ref{prop:largetensor_properties},  it is sufficient to
consider the case where $M,N$ are
bounded below, $M \cong M_{\geq m}$, $N \cong N _{\geq n}$.
The surjection $M \otimes N \twoheadrightarrow ( M \otimes N)^{<c}$ factors
canonically across $M \otimes N
\twoheadrightarrow M^{< c -n} \otimes N ^{< c -m}$.
This induces a natural morphism
\[
 (\Philocal_s \largetensor M) \otimes (\Philocal_s \largetensor N)
\stackrel{}{\rightarrow}
\Philocal_s \otimes(M \otimes N)^{<c}
\]
 to the inverse system defining $\Philocal_s \largetensor (M \otimes N)$, which
is the required morphism.

The remaining statements are proved by establishing associativity and graded
commutativity, which follow from the
uncompleted case, by the construction above.
\end{proof}

\begin{corollary}
\label{cor:Philocal_s_modules}
For $M \in \obj \amod$ and $s \in \nat$, 
$\Philocal _1 \largetensor (\Philocal_s
\largetensor M)$ is naturally a $\Philocal_1 \largetensor \Philocal_s $-module
in $\amod$.
\end{corollary}

%%%%%%%%%%%%%%%%%%%%%%%%%%%%%%%%%%%%%%%%%%%%%%%%%%%%%%%%%%%%%%%%%%%%%%%%%%%%%%%%
%%%%%%%%%%%%%%%
\subsection{Embeddings} An isomorphism of vector spaces $V_1 \oplus V_s
\cong\field_p \oplus \field_p ^{\oplus s}
\stackrel{\cong}{\rightarrow} \field_p ^{\oplus s+1} \cong V_{s+1}$ induces an
isomorphism of unstable algebras over the
Steenrod algebra
$
	H^* (BV_{s+1}) \stackrel{\cong}{\rightarrow} H^* (BV_1) \otimes H^*
(BV_{s}).
$

\begin{lemma}
\label{lem:Philocal_s+1_inclusion}
For $s \in \nat$ an isomorphism  $V_1 \oplus V_s
\stackrel{\cong}{\rightarrow} V_{s+1}$ induces a unique  monomorphism of
algebras in $\amod$,
	$
		\Philocal_{s+1} \hookrightarrow \Philocal_1 \largetensor
\Philocal_s
	$
which fits into a commutative diagram of morphisms of algebras in
$\amod$:
	\[
		\xymatrix{
		H^* (BV_{s+1} )
		\ar[d]
		\ar[r]^(.4)\cong
		&
		H^* (BV_1) \otimes H^* (BV_{s})
		\ar[d]
		\\
		\Philocal_{s+1}
		\ar[r]
		&
		\Philocal_1 \largetensor \Philocal_s,
		}
	\]
in which the vertical morphisms are induced by localization.
\end{lemma}

\begin{proof}
It suffices to verify that $Q_{s+1,0}$ is invertible in $\Philocal_1
\largetensor \Philocal_s$; although this is standard, the argument is given for the convenience of the reader. 

Writing the polynomial part of $H^* (BV_{s+1})$ as $\field_p [y_1, \ldots , y_{s+1}]$ and working with respect to the
isomorphism $\field_p [y_1, \ldots , y_{s+1}] \cong \field_p [y_1] \otimes \field_p [y_2, \ldots y_{s+1}]$, the Dickson
invariant can be written as
\[
	Q_{s+1,0} = -
	Q_{s,0} \prod_{\lambda \in \field_p^\times, y} (\lambda y_1 + y)
\]
where $Q_{s,0}$ is the top Dickson invariant for $\field_p [y_2, \ldots y_{s+1}]$ and $y$ ranges over the elements of
the vector space $\langle y_2, \ldots , y_{s+1} \rangle$. Hence, by reindexing and using $\prod_{\lambda \in \field_p^\times} \lambda = -1$:
\[
	Q_{s+1,0} =
	Q_{s,0} y_1^{p^s(p-1)}  \prod_{y} \Big( 1 + \frac{y}{y_1}\Big)^{p-1} \in \Philocal_1 \otimes \Philocal_s.
\]
Since 
$Q_{s,0}$ is invertible in $\Philocal_s$, it suffices to observe that each element $ 1 + \frac{y}{y_1}$ is invertible in $\Philocal_1 \largetensor \Philocal_s$; the inverse is given by
$\sum_{i\geq 0} (-1) ^i \Big(\frac{y}{v_1} \Big)^i$.
\end{proof}

By Corollary \ref{cor:Philocal_s_modules}, for $M \in \obj \amod$, $\Philocal_1
\largetensor (\Philocal_s \largetensor M)$ has a
$\Philocal_1 \largetensor \Philocal_s $-module structure, hence a
$\Philocal_{s+1}$-module structure by restriction along the monomorphism
provided by Lemma \ref{lem:Philocal_s+1_inclusion} above.

\begin{proposition}
\label{prop:otimes_embedding}
For $M \in \obj \amod$ and $s\in \nat$, an isomorphism  $V_1 \oplus V_s
\stackrel{\cong}{\rightarrow} V_{s+1}$ induces a natural
monomorphism 
$
	\Philocal_{s+1} \otimes M
	\hookrightarrow
	\Philocal_1 \largetensor (\Philocal_s \largetensor M)
$
in $\Philocal_{s+1}\dash \amod$.
\end{proposition}

\begin{proof}
	The embedding $M
	\hookrightarrow
	\Philocal_1 \largetensor (\Philocal_s \largetensor M) $ induced by the
respective units of $\Philocal_1$,
$\Philocal_s$  extends to an $\cala$-linear embedding of
$\Philocal_{s+1}$-modules, by Lemma \ref{lem:Philocal_s+1_inclusion}.
\end{proof}

\begin{remark}
	The large tensor
product $\Philocal_{s+1} \largetensor
M$ does not in general embed in $\Philocal_1 \largetensor \Big( \Philocal_s
\largetensor M
\Big)$; this stems from the fact that the functor $\Philocal_1$ is only weakly
continuous and not continuous. 
\end{remark}

\section{The functors $\rfunctilde_1$ and $\rfunc_1$}
\label{sect:rfunc1}

This section introduces the functor $\rfunc_1$ on the category $\amod$, which
generalizes the functor $R_1$ defined by
Zarati \cite{zarati_these} for $\unst$. This is achieved by first introducing a
functor
$\rfunctilde_1$ and then restricting to
$GL_1$-invariants. The non-trivial step in the construction is Proposition
\ref{prop:rfunctilde_astable}, which establishes invariance under the action of
the Steenrod algebra.

%%%%%%%%%%%%%%%%%%%%%%%%%%%%%%%%%%%%%%%%%%%%%%%%%%%%%%%%%%%%%%%%%%
\subsection{The total Steenrod power}
\label{subsect:totalst}

The total Steenrod power $\st_1$ is fundamental to the constructions of this
paper, as
in the work of
Zarati \cite{zarati_these} and the work of Hung and Sum \cite{hung_sum}, who use
a stable version, $S_1$. The precise
relationship between the total Steenrod power and algebras of invariants was
established by M{\`u}i
\cite{mui_cohom_operations}.

\begin{nota}
Write $w$ for the element $\frac{u}{v} \in \Philocal_1  \cong
\Lambda (u)
\otimes \field_p [v^{\pm 1}]$ of
degree $-1$.
\end{nota}

\begin{definition}
(Cf. \cite[Definition 2.4]{hung_sum}.)
\label{def:S1}
For $M \in \obj \amod$, let $S_1 :  M \rightarrow \Philocal_1
\largetensor M$ be the linear morphism
\[
S_ 1 (x) :=
\sum _{i \geq  0, \epsilon \in \{0,1 \}}
(-1) ^{i+ \epsilon} w^ \epsilon Q_{1,0}^{-i}
\otimes \beta ^{\epsilon} P^i (x).
\]
For $x \in M^{2k+ \delta}$ ($\delta \in \{ 0, 1 \}$), following \cite[Section
2.4.1]{zarati_these}, define
\[
\st_1 (x): = (-1)^k \euler_1^{|x|}   S_1 (x)
\]
so that $|\st_1 (x)|= p |x|$.
\end{definition}

\begin{remark}
	The element $\st_1 (x)$ depends only upon the class $\euler_1$, hence is
 independent (up to sign) of the choice of $u,v$.
\end{remark}

The main properties of $S_1$ are resumed by the following, from which the
corresponding results for $\st_1$ can be deduced (cf. \cite{zarati_these}).

\begin{proposition}
	\label{prop:S1_properties}
\ 
\begin{enumerate}
	\item
\cite[Remark 2.11]{hung_sum}
The morphism $S_1$ takes values in $\boldGamma_1 \largetensor M$.
\item
\cite[Corollary 2.10]{hung_sum} The morphism $S_1$ is stable.
\item
\cite[Proposition 2.6]{hung_sum}
For $M_1, M_2 \in \obj \amod$ and elements $x_1 \in M_1$, $x_2 \in M_2$,
$
S_1 (x_1 \otimes x_2) = S_1 (x_1) \bullet S_1 (x_2).
$
\end{enumerate}
\end{proposition}

%%%%%%%%%%%%%%%%%%%%%%%%%%%%%%%%%%%%%%%%%%%%%%%%%%%%%%%%%%%%%%%%%%%
\subsection{The functor $\rfunctilde_1$}
\label{subsect:rfunc1_def}

Let $K_1$ denote the unstable algebra $H^* (BV_1)^{\sltilde_1}$, which was
identified in Example \ref{exam:invariants_s=1}.

\begin{definition}
 (Cf. \cite{zarati_these}.)
For $M \in \obj \amod$, let $\rfunctilde_1 M$ denote the sub $K_1$-module of
$\Philocal_1^{\sltilde_1} \largetensor M$
generated by the elements $\st_1 (x)$, $x \in M$.
\end{definition}

There are many ways to prove the following result; for brevity, a proof using
the calculations of \cite{hung_sum} is given. 

\begin{proposition}
\label{prop:rfunctilde_astable}
 The submodule $\rfunctilde_1 M \subset \Philocal_1^{\sltilde_1} \largetensor M$
is stable under the action of the
Steenrod algebra $\cala$, hence $\rfunctilde_1 M \in  \obj K_1 \dash \amod$.
\end{proposition}

\begin{proof}
It is sufficient to show that, for any element $x$ of $M$, the elements $\beta
\st_1 (x) $ and $P^i \st_1 (x)$ ($i
\in \nat$) belong to $\rfunctilde_1 M \subset \Philocal_1 \largetensor M$. Since
 $\beta \st_1 (x)=0$,  it suffices to consider the case of the reduced
powers.

By the Cartan formula, it is
straightforward to see that it is sufficient to show that $\euler_1 ^{|x|} P^i
S_1
(x)$ belongs to $\rfunctilde_1 M$ for
any $i \in \nat$. Proposition 4.10 of \cite{hung_sum} implies that
\[
 P^i (S_1 (x))
= \sum_{\epsilon \in \{0,1\}, t\geq 0}
\left(
\begin{array}{c}
 - (p-1) t - \epsilon \\
i - pt - \epsilon
\end{array}
\right)
w^\epsilon
\euler_1 ^{2(i-t) }
S_1 (\beta^\epsilon P^t x).
\]
Writing $w$ as $\oddgen_{1,0}\euler_1 ^{-1}$, this gives
\[
 \euler_1 ^{|x|} P^i (S_1 (x))
= \sum_{\epsilon \in \{0,1\}, t\geq 0}
\left(
\begin{array}{c}
 - (p-1) t - \epsilon \\
i - pt - \epsilon
\end{array}
\right)
\oddgen_{1,0} ^\epsilon
\euler_1 ^{2(i-t) + |x|- \epsilon}
S_1 (\beta^\epsilon P^t x).
\]
 To show that the right hand side belongs to $\rfunctilde_1 M$, it is sufficient
to show that the terms with 
\[
2 (i-t) + |x| - \epsilon  < |\beta^{\epsilon}P^t x |
\]
have trivial coefficient. The above condition is equivalent to $i - pt -
\epsilon< 0$, which implies the vanishing of
the binomial coefficient, as required.
\end{proof}

\begin{proposition}
\label{prop:rfunctilde1_gen_properties}
Let $M, N$ be $\cala$-modules.
\begin{enumerate}
 \item
The underlying  $K_1$-module of $\rfunctilde_1 M$ is free on $\langle \st_1 (x)
| x \in M \rangle $.
\item
The functor $\rfunctilde_1 : \amod
\rightarrow K_1 \dash \amod$ is exact
and commutes with colimits.
\item
The functor $\rfunctilde_1 : \amod \rightarrow \amod$ is weakly continuous and
$\kappa_1$-connective, where
$\kappa_1 : n \mapsto pn$.
\item
There is a natural isomorphism
$
 \rfunctilde_1 (M \otimes N)
\cong
\rfunctilde_1 (M) \otimes _{K_1} \rfunctilde_1 (N)
$
in $K_1 \dash \amod$.
\end{enumerate}
\end{proposition}

\begin{proof}
The first statement is straightforward (cf. the argument of \cite[Proposition
3.3.4]{zarati_these}). This implies the second statement and the
$\kappa_1$-connectivity. To show that $\rfunctilde_1$ is weakly continuous, the
functor $\rfunctilde_1$ can be considered as taking values in $K_1 \dash \amod$
and the inverse
limit can be formed in this category. For $M \in \obj \amod$, consider
the morphisms 
	$
		\rfunctilde_1 M
		\twoheadrightarrow
		\rfunctilde_1 M^{<c}
		\hookrightarrow
		\Philocal_1 \otimes M^{<c}.
	$ of $K_1\dash \amod$.
Passing to the inverse limit, as $c \rightarrow \infty$, this gives morphisms of
$K_1 \dash \amod$:
\[
		\rfunctilde_1 M
		\twoheadrightarrow
		\lim_{\leftarrow c} \rfunctilde_1 M^{<c}
		\hookrightarrow
		\Philocal_1 \largetensor M
	\]
where Mittag-Leffler gives the surjection. It is clear
 that the composite coincides with the
 inclusion $\rfunctilde _1 M
\hookrightarrow \Philocal_1 \largetensor M$, hence $\rfunctilde_1 M
		\twoheadrightarrow
		\lim_{\leftarrow c} \rfunctilde_1 M^{<c}$ is an isomorphism, as
required.

The tensor product property follows from the multiplicative property of $\st_1$
deduced from Proposition \ref{prop:S1_properties} and the description of the
underlying $K_1$-module;  the isomorphism is induced
by the restriction of the product morphism $(\Philocal_1 \largetensor M) \otimes
(\Philocal_1 \largetensor N)
\stackrel{\bullet}{\rightarrow} \Philocal_1 \largetensor (M \otimes N)$ to
$\rfunctilde_1 (M) \otimes \rfunctilde_1 (N)$.
\end{proof}

%%%%%%%%%%%%%%%%%%%%%%%%%%%%%%%%%%%%%%%%%%%%%%%%%%%%%%%%%%%%%%%%%
\subsection{Eigenspace splitting}
\label{subsect:eigenspace_rfunctilde1}

By Lemma \ref{lem:general_eigenspace},  there is a
splitting
$
 K_1 \cong K_1 ^+ \oplus K_1 ^-
$
 in $K_1^+ \dash
\unst$,  where $K_1^+ = H^* (BV_1)^{GL_1}$;  this induces a
splitting
 $
 \Philocal_1^{\sltilde_1} \cong \boldGamma_1 \oplus
(\Philocal_1^{\sltilde_1})^-
 $
in $\boldGamma_1\dash \amod$.

\begin{proposition}
\label{prop:rfunctilde_splitting}
For $M \in \obj \amod$, there is a natural splitting in  $K_1 ^+ \dash \amod$:
\[
 \rfunctilde_1 M \cong \rfunc_1 M \oplus \rfunc_1^- M , 
\]
where $\rfunc_1 M := \rfunctilde_1 M \cap \Big( \Philocal_1 ^{GL_1} \largetensor
M \Big)$ and  $\rfunc_1^- M \cong
\rfunctilde_1 M
\cap \Big((\Philocal_1 ^{\sltilde_1})^- \largetensor M\Big)$. In particular
$\rfunc_1 M$ and $\rfunc_1^- M$ are sub
$\cala$-modules of $\rfunctilde_1 M$.

There are natural isomorphisms of $K_1 ^+$-modules:
\begin{eqnarray*}
 \rfunc_1 M &\cong & K_1^+ \st_1 (M\even) \oplus K_1 ^- \st_1 (M\odd) \\
\rfunc_1^-  M &\cong & K_1^- \st_1 (M\even) \oplus K_1 ^+ \st_1 (M\odd).
\end{eqnarray*}
\end{proposition}

\begin{proof}
Straightforward.
\end{proof}

\begin{remark}
For  $M \in \obj \unst$, $\rfunc_1 M $ coincides with Zarati's  $R_1 M
$ \cite{zarati_these}. 
\end{remark}

\begin{corollary}
\label{cor:rfunc_1_weak_cts}
 The associations $M \mapsto \rfunc_1 M$ and $M \mapsto \rfunc_1^-M$ define
functors
\begin{eqnarray*}
 \rfunc_1 , \rfunc_1^- : \amod & \rightrightarrows & K_1 ^+ \dash \amod
\end{eqnarray*}
which are exact, commute with colimits, are weakly continuous and
$\kappa_1$-connective.
\end{corollary}

\begin{proof}
 The result follows from Proposition \ref{prop:rfunctilde_splitting} and
Proposition \ref{prop:rfunctilde1_gen_properties}.
\end{proof}

\begin{proposition}
\label{prop:rfunc1_suspension}
 For $M \in \obj  \amod$, there is a natural isomorphism $
 \rfunctilde_1(\Sigma M) \cong \Sigma \euler_1 \rfunctilde_1 M.
$ in $K_1\dash \amod$. This restricts to natural isomorphisms of $K_1
^+$-modules:
\begin{eqnarray*}
 \rfunc_1 (\Sigma M) &\cong &\Sigma \euler_1 \rfunc_1^- M \\
\rfunc_1^-(\Sigma M) &\cong &\Sigma \euler_1 \rfunc_1 M.
\end{eqnarray*}
In particular, $\rfunc_1 (\Sigma M) $ is a sub $\cala$-module of $\Sigma
\rfunc_1 M$. 
\end{proposition}

\begin{proof}
 Straightforward.
\end{proof}

%%%%%%%%%%%%%%%%%%%%%%%%%%%%%%%%%%%%%%%%%%%%%%%%%%%%%%%%%%%%%%%%%%%%%%%%%%%%%%%%
\subsection{Multiplicative properties}

The multiplicativity of $S_1$ (cf. Proposition \ref{prop:S1_properties})
implies the following, using the multiplicative structures of 
Proposition \ref{prop:bullet-product}.

\begin{proposition}
\label{prop:rfunc1_mult_properties}
For $B$  an algebra in $\amod$ and $M \in \obj B\dash \amod$:
\begin{enumerate}
 \item
$\rfunctilde_1 B$ has a natural $K_1$-algebra structure in $\amod$ and
$\rfunctilde_1 M$ has a natural
$\rfunctilde_1 B$-module structure in $\amod$;
\item
$\rfunc_1 B$ has a natural $K_1^+$-algebra structure in $\amod$ and $\rfunc_1 M$
has a natural
$\rfunc_1 B$-module structure in $\amod$.
\end{enumerate}
For $L$ an unstable algebra and $N \in \obj L\dash \unst$:
\begin{enumerate}
	\item
$\rfunctilde_1 L$ has a natural $K_1$-algebra structure in unstable algebras and
$\rfunctilde_1 N$ is naturally an
object of
$\rfunctilde_1 L \dash \unst$;
\item
$\rfunc_1 L$ has a natural $K_1^+$-algebra structure in unstable algebras and
$\rfunc_1 N$  is naturally an object of
$\rfunc_1 L\dash \unst$.
\end{enumerate}
\end{proposition}

%%%%%%%%%%%%%%%%%%%%%%%%%%%%%%%%%%%%%%%%%%%%%%%%%%%%%%%%%%%%
\subsection{The transformation $\rho_1$ and the fundamental short exact
sequence}

Proposition \ref{prop:rfunc1_suspension} shows that, for $M \in \obj \amod$,
there is a natural monomorphism in $K_1^+\dash \amod$:
\[
 \Sigma ^{-1}\rfunc_1(\Sigma M) \hookrightarrow \rfunc_1 M   
\]
and the cokernel identifies, as a graded vector space, with $\st_1 (M \even)
\oplus
\oddgen_{1,0} \st_1(M\odd)$. This  can be analysed in $\amod$ by using the 
natural transformation
$\rho_1$ introduced below, which generalizes (up to sign) that defined by Zarati
\cite[Définition-Proposition 3.3.7]{zarati_these} for unstable modules.

\begin{definition}
\label{def:rho1}
 Let $\rho_1  : \rfunc_1 M \rightarrow \Sigma^{-2} \Phi \Sigma M $ be the linear
natural transformation which factors
over the  cokernel of $ \Sigma ^{-1}\rfunc_1(\Sigma M) \hookrightarrow \rfunc_1
M$, defined on generators
by
\begin{eqnarray*}
  \st_1 (m) &\mapsto& -\Sigma ^{-2} \Phi (\Sigma m) \\
\oddgen_{1,0} \st _1 (n) & \mapsto &  \Sigma ^{-2} \Phi (\Sigma n),
\end{eqnarray*}
where $m \in M\even$ and $n \in M \odd$.
\end{definition}

\begin{proposition}
\label{prop:rho1_ses}
 For $M \in \obj \amod$, there is a natural short exact sequence in $\amod$:
\begin{eqnarray}
\label{eqn:rho1_ses}
 0 \rightarrow
\Sigma ^{-1}\rfunc_1(\Sigma M) \rightarrow \rfunc_1 M
\stackrel{\rho_1}{\rightarrow}
\Sigma^{-2} \Phi \Sigma M
\rightarrow
0.
\end{eqnarray}
\end{proposition}

\begin{proof}
The surjectivity of $\rho_1$ is clear, and the exactness as graded vector spaces
follows. It remains to show that $\rho_1$ is $\cala$-linear; this is
a straightforward calculation, using the method of proof of Proposition
\ref{prop:rfunctilde_astable}. Namely, (using the notation of {\em loc. cit.})
the only terms  of $P^i \st_1 (x)$ which are non-zero correspond to the cases:
\begin{enumerate}
 \item
$\epsilon =0$, $|x|$ even and $i = pt$;
\item
$\epsilon=1$, $|x|$ odd and $i = pt +1$.  
\end{enumerate}
\end{proof}

%%%%%%%%%%%%%%%%%%%%%%%%%%%%%%%%%%%%%%%%%%%%%%%%%%%%%%%%%%%%%%
\subsection{The Singer evaluation}

Writing $H^* (BV_1) \cong \Lambda (u) \otimes \field_p [v]$, 
$\Philocal_1$ is the algebra $\Lambda (w)
\otimes \field_p [v^{\pm 1}]$, where $w= \frac{u}{v}$ has degree $-1$. The
linear morphism
$ 
\partial :  \Philocal_1 \twoheadrightarrow \Sigma^{-1} \field_p
$
sending $w$ to the canonical generator is $\cala$-linear by a fundamental result
of Singer (cf. \cite[Proposition
2.2]{li_singer} for $p$ odd).

\begin{definition}
\label{def:d_M}
 For $M \in \obj \amod$, let
$
 d_M : \rfunc_1 M \rightarrow \Sigma^{-1} M
$
be the natural transformation defined by the composite:
\[
 \rfunc_1 M \hookrightarrow \Philocal_1 \largetensor M \stackrel{\partial
\largetensor M} {\rightarrow}
\Sigma^{-1} M .
\]
\end{definition}

Recall from Section \ref{subsect:destab_functor} the explicit description of
$\destab M$ for $M \in \obj \amod$ as the
quotient $\destab M \cong M / BM$.
The following result establishes the interest of the functor $\rfunctilde_1$,
corresponding to the origin of the instability condition.

\begin{proposition}
\label{prop:coker_dM_destab}
For $M \in \obj  \amod$, the cokernel of
$
 d _{ M} :  \rfunc_1 M \rightarrow  \Sigma^{-1} M
$ 
 is $\Sigma^{-1} \destab M $.
\end{proposition}

\begin{proof}
The element $\st_1 (x)$ can be written as
$
\sum
\pm w^{\epsilon} \euler_1^{|x| -2i} \otimes \beta ^\epsilon P^i (x).
$

There is a basis of $\rfunc_1 M$  consisting of elements of the form
\begin{enumerate}
 \item
$|x|$ even: $Q_{1,0}^n \st_1 (x)$ or $w Q_{1,0}^{n+1}\st_1 (x)$
\item
$|x|$ odd: $\euler_1^{2n+1}\st_1(x)$ or $w\euler_1^{2n+1}\st_1 (x)$, 
\end{enumerate}
where $n \geq 0$, and $x$ runs over  a homogeneous basis of $M$.

Calculation gives:
\begin{enumerate}
 \item
$|x|$ even, $d (Q_{1,0}^n \st_1 (x)) = (-1)^{n+1} \Sigma^{-1}   \beta
P^{\frac{|x|+2n}{2}} (x)$;
\item
$|x|$ even, $d (w Q_{1,0}^{n+1}\st_1 (x) )= (-1)^{n+1} \Sigma^{-1} P^{\frac{|x|+
2(n+1)}{2}}(x)$;
\item
$|x|$ odd, $d(\euler_1^{2n+1}\st_1(x)) = (-1)^n  \Sigma^{-1}  \beta
P^{\frac{|x|+ 2n+1}{2}} (x)$;
\item
$|x|$ odd, 
$d (w \euler_1^{2n+1}\st_1 (x)) = (-1)^{n+1} \Sigma ^{-1}P^{\frac{|x|+ 2n+1}{2}}
(x)$.
\end{enumerate}
It follows that the image of $d$ is equal to $\Sigma^{-1} BM$, whence the
result. 
\end{proof}

%%%%%%%%%%%%%%%%%%%%%%%%%%%%%%%%%%%%%%%%%%%%%%%%%%%%%%%%%%%%%
\subsection{A connecting morphism}

The morphism $\Sigma d_M :\Sigma  \rfunc_1 M \rightarrow  M $ is the tail of the
chain complex  introduced in Section
\ref{sect:chain_complex} and the short exact sequence
(\ref{eqn:rho1_ses}) of Proposition \ref{prop:rho1_ses} fits into a short
exact sequence of chain complexes. An  understanding of the
connecting morphism induced in homology is required, which is given by the
following

\begin{proposition}
\label{prop:connecting_lambda}
 For $M \in \obj \amod$, the following diagram commutes
\[
 \xymatrix{
\rfunc_1 \Sigma M
\ar@{^(->}[r]
\ar[d]_{d_{\Sigma M}}
&
\Sigma \rfunc_1 M
\ar@{->>}[r]^{\Sigma \rho_1}
\ar[d]^{\Sigma d_M}
&
\Sigma^{-1} \Phi \Sigma M
\\
M
\ar@{=}[r]
\ar@{->>}[d]
&
M
\\
\Sigma ^{-1} \destab (\Sigma M),
}
\]
in which the three-term sequences are exact.

The associated connecting morphism
$
 \Sigma^{-1} \Phi \Sigma M \rightarrow \Sigma^{-1} \destab (\Sigma M) 
$
is  induced by $ \Sigma^{-1} \lambda_{\Sigma M}  : \Sigma^{-1}  \Phi (\Sigma M)
\rightarrow M$.
\end{proposition}

\begin{proof}
Consider an element $\Sigma^{-1}\Phi \Sigma m$ of $\Sigma^{-1}\Phi \Sigma M$; if
$m$ is of even degree then, by the
definition of $\rho_1$, $\Sigma \st_1 (m) \in \Sigma \rfunc_1 M$ is a lift;
under the differential $\Sigma d_M$, this
maps to $ \beta P^{|m|/2} m$, as in the proof of Proposition
\ref{prop:coker_dM_destab}.

Similarly, if $m$ is of odd degree, $\Sigma w \euler_1 \st_1  (m)$ is a lift;
under the differential $\Sigma d_M$,
this maps to $ P^{(|m|+1)/2} m$.

Now $\Sigma m \in \Sigma M$  is of degree $|m|+1$. On passing
to the cokernel, the morphism is well-defined and, by inspection, is induced by
the morphism $\Sigma^{-1}
\lambda_{\Sigma M} $, as required.
\end{proof}

\section{The functors  $\rfunc_s$}
\label{sect:rfunctors}

%%%%%%%%%%%%%%%%%%%%%%%%%%%%%%%%%%%%%%%%%%%%%%%%%%%%%%%%%%%%%%%%%%%%%%%%%
\subsection{Higher total Steenrod powers}

As in \cite[Definition 2.4]{hung_sum}, for $M \in \obj \amod$ and $0<s \in
\nat$, 
the linear morphism $S_1 : M \rightarrow \Phi_1 \largetensor M$ can be iterated,
to define 
$
	S_s : M \rightarrow \Philocal_s \largetensor M.
$

\begin{remark}
This recursive interpretation of $S_1 \circ S_{s-1}$ as above, rather than as a
map to $\Philocal_1 \largetensor (\Philocal_{s-1} \largetensor M)$ is one of
the unavoidable technicalities of the subject. A verification is given in
\cite[Lemma 2.5]{hung_sum}. 
\end{remark}

The definition of $\st_1$ in Definition \ref{def:S1} extends to the following,
with respect to the chosen basis of $V_s$.

\begin{definition}
	For $M \in \obj \amod$ and $x \in M$, define 
$
	\st_s (x) := (-1) ^{s [\frac{|x|}{2}] } \euler_s ^{|x|} S_s (x),
$ 
so that $|\st_s (x)| = p^s x$.
\end{definition}

\begin{remark}
The morphism $\st_s$ can also be defined directly by iterating $\st_1$.
\end{remark}

%%%%%%%%%%%%%%%%%%%%%%%%%%%%%%%%%%%%%%%%%%%%%%%%%%%%%%%%%%%%%%%%%%%%%%%%
\subsection{Iterating the functor $\rfunctilde_1$}

\begin{lemma}
\label{lem:iterate_rfunctilde1}
	For  $s \in \nat$,
\begin{enumerate}
	\item
$\rfunctilde_1^{\circ s} : \amod \rightarrow \amod$ is
exact, weakly continuous and
$\kappa_s$-connective, where $\kappa_s (n) : = p^s n$;
\item
$\rfunctilde_1^{\circ s} : \amod \rightarrow \amod$ takes
values in $\rfunctilde_1^{\circ s}\field_p \dash \amod$.
\end{enumerate}
\end{lemma}

\begin{proof}
	The first statement follows from Proposition
\ref{prop:composition_weakly_cts_connective} and Corollary
\ref{cor:rfunc_1_weak_cts};  the second is a consequence of
the multiplicative properties of $\rfunctilde_1$ given in Proposition
\ref{prop:rfunc1_mult_properties}.
\end{proof}

Fix a choice of basis for $V_s$,  giving an isomorphism $H^* (BV_1) \otimes
\ldots \otimes H^* (BV_1) \cong H^* (BV_s)$ and
recursively a monomorphism of unstable algebras $
 \rfunctilde_1^{\circ s} \field_p \hookrightarrow H^* (BV_s).
$ The following result is required for defining the functors $\rfunc_s$, since
it
allows restriction to invariants. 

\begin{proposition}
\label{prop:rtilde_iterate_Philocal_s}
For $M \in \obj \amod$ and  $s \in \nat$,
\begin{enumerate}
\item
	there is a natural monomorphism
$		\rfunctilde_1^{\circ s} M
		\hookrightarrow
		\Philocal_s \largetensor M
$
of functors $\amod \rightarrow \rfunctilde_1^{\circ s}\field_p \dash \amod$,
where the right hand side is an
$\rfunctilde_1^{\circ s} \field_p$-module via restriction
of the  $\Philocal_s$-module structure along  $\rfunctilde_1
^{\circ s} \field_p \hookrightarrow H^*
(BV_s) \hookrightarrow \Philocal_s$;
\item
the underlying  $\rfunctilde_1^{\circ s}\field_p$-module of
$\rfunctilde_1^{\circ s} M$, considered as a submodule of $\Philocal_s
\largetensor M$, is free on $\st_s M$. 
\end{enumerate}
\end{proposition}

\begin{proof}
The first statement is proved by induction upon $s$, the cases $s=0,1$ being
clear. Suppose that the natural monomorphism $\rfunctilde_1^{\circ s-1} M
\hookrightarrow \Philocal_{s-1} \largetensor M$ is defined; to exhibit the
analogous natural transformation for $s$, since the functors are weakly
continuous, it suffices to
restrict to modules  $M$ which are bounded above, by Proposition
\ref{prop:properties_weak_cts}. 
 In this case, Proposition \ref{prop:otimes_embedding} provides a monomorphism
of $\Philocal_s$-modules $\Philocal_s \otimes M
\hookrightarrow \Philocal_1 \largetensor (\Philocal_{s-1} \largetensor M)$ and
there is a factorization 
\[
\xymatrix{
\rfunctilde_1 (\rfunctilde_1^{\circ s-1} M) 
\ar@{^(->}[rr]
\ar@{^(->}[rd]
&&
\Philocal_1 \largetensor (\Philocal_{s-1} \largetensor M),
\\
&
\Philocal_s \otimes M 
\ar@{^(->}[ur]
}
\]
 where the image of $\rfunctilde_1 (\rfunctilde_1^{\circ s-1} M) $ in
$\Philocal_s \otimes M$  identifies with the free $\rfunctilde_1^{\circ s}
\field_p$-module on $\st_s M$;   by induction this is a straightforward
consequence of the identification of $\st_s$ as $\st_1 \circ \st_{s-1}$ and the
multiplicative property of $\st_1$. The second statement follows by passage to
the limit.
\end{proof}

\begin{definition}
\label{def:rfunctildes}
	For $s \in \nat$  and  $M \in \obj \amod$, denote by
\begin{enumerate}
	\item
$K_s$ the unstable algebra
	$
 K_{s} : = \rfunctilde_1^{\circ s} \field_p  \cap H^* (BV_{s}) ^{\sltilde_{s}};
$
\item
$\rfunctilde_s M $  the sub $K_s$-module
$\rfunctilde_1^{\circ s} M \  \cap \  \Philocal_s^{\sltilde_s} \largetensor M $.
\end{enumerate}
By convention, the functor $\rfunctilde_0$ is  the identity.
\end{definition}

\begin{remark}
The subobject $\rfunctilde_s M$ of $\Philocal_s \largetensor
M$ is independent of the choice of basis used in defining the embedding
$\rfunctilde_1^{\circ s} M \hookrightarrow \Philocal_s \largetensor M$.
\end{remark}

 The introduction of the  functors $\rfunctilde_s$ is not strictly necessary
for the constructions of the paper; however, they exhibit better formal
properties than the functors $\rfunc_s$ introduced below, which are worth
reviewing.

\begin{corollary}
\label{cor:rfunctilde_s_general}
For $s\in \nat$:
\begin{enumerate}
	\item
	$M \mapsto \rfunctilde_s M$ defines a functor
$\rfunctilde_s : \amod \rightarrow K_s\dash
\amod$, equipped with a natural transformation $\rfunctilde_s M \hookrightarrow
\Philocal_s \largetensor M $ of functors
with values in $K_s \dash \amod$;
\item
	the underlying $K_s$-module of $\rfunctilde_s M $ is free on $\st_s M$;
	\item
	the functor $\rfunctilde_s : \amod \rightarrow \amod$ is exact, commutes
with colimits, is weakly continuous and
is $\kappa_s$-connective, where $\kappa_s (n)= p^s n$;
\item
for $M, N \in \obj \amod$, there are natural
isomorphisms in $K_s \dash \amod$
\begin{eqnarray*}
 \rfunctilde_s (M \otimes N)
&\cong&
\rfunctilde_s (M) \otimes _{K_s} \rfunctilde_s (N)
\\
 \rfunctilde_s(\Sigma M)
& \cong &\Sigma \euler_s \rfunctilde_s M.
\end{eqnarray*}
\end{enumerate}
\end{corollary}

\begin{proof}
	An immediate consequence of Lemma \ref{lem:iterate_rfunctilde1}
and Proposition \ref{prop:rtilde_iterate_Philocal_s}.
\end{proof}

%%%%%%%%%%%%%%%%%%%%%%%%%%%%%%%%%%%%%%%%%%%%%%%%%%%%%%%%%%%%%%%%%%%%%%%%%%%%%
\subsection{The functors $\rfunc_s$}

As for $s=1$, the functor $\rfunc_s$ is constructed by an eigenspace
splitting (cf. Lemma
\ref{lem:general_eigenspace}). The action of $\zed/2$ on $H^*
(BV_{s}) ^{\sltilde_{s}}$ induces an action of $\zed/2$ on $K_s$ by morphisms of
unstable algebras, hence an eigenspace
decomposition
$
	K_s \cong K_s ^+ \oplus K_s^-
$
in  $K_s^+ \dash \unst$, which passes to $\Philocal_s ^{\sltilde_s} \cong
\boldGamma_s \oplus (\Philocal^{\sltilde_s} ) ^-$ in $\boldGamma_s \dash \amod$.

\begin{definition}
For $s \in \nat$ and $M \in \obj \amod$, let
\begin{enumerate}
\item
$\rfunc_s M$ be the sub $K_s^+$-module  $\rfunctilde_1^{\circ s}  M \cap \
\boldGamma_s \largetensor M = \rfunctilde_s M \cap \
\boldGamma_s \largetensor M$;
\item
$\rfunc_s ^- M$ be the sub $K_s^+$-module $\rfunctilde_1^{\circ s} M \cap \
(\Philocal_s^{\sltilde_s})^- \largetensor M = \rfunctilde_s M \cap \
(\Philocal_s^{\sltilde_s})^- \largetensor M$.
\end{enumerate}
\end{definition}

\begin{remark}
	The functor $\rfunc_0$ is the identity and $\rfunc_0^-=0$.
\end{remark}

\begin{proposition}
\label{prop:rfunc_s_properties}
For $s \in \nat$,
\begin{enumerate}
 \item
$M \mapsto \rfunc_s M$, $M \mapsto \rfunc_s^- M$ define
functors $\amod \rightarrow K_s^+ \dash \amod$;
\item
there is a natural monomorphism 
$
 \rfunc_s M \hookrightarrow \boldGamma_s \largetensor M
$ in $K_s^+
\dash \amod$;
\item
there is a natural decomposition $
 \rfunctilde_s M \cong \rfunc_s M \oplus \rfunc_s^- M
$ 
in $K_s^+ \dash \amod$ and,  as modules over $K_s^+$,
\begin{eqnarray*}
 \rfunc_s M & \cong & K_s^+ \st_ s(M\even) \oplus K_s^- \st_s (M \odd) \\
\rfunc_s^- M &\cong & K_s^- \st_s (M\even) \oplus K_s ^+\st_s (M\odd);
\end{eqnarray*}
\item
the functors $\rfunc_s$, $\rfunc_s^-$ are exact, commute with
colimits, are weakly continuous and $\kappa_s$-connective.
\end{enumerate}
\end{proposition}

\begin{proof}
 The first statement is clear; the second is a consequence of the restriction to
$GL_s$-invariants, which implies that
the constructions are independent
of choices.  The third identification follows from the fact that $\st_s (x)$ is
$\sltilde_s$-
invariant and is $GL_s$ invariant if and only if $|x|$ is even; the final
statement is an immediate consequence of Corollary
\ref{cor:rfunctilde_s_general}.
\end{proof}

\begin{remark}
	This recovers, in the case $M$ unstable, the definition of the functor
$R_s$ given by Zarati \cite[Définition
2.4.5]{zarati_these}, subject to the identification of the unstable algebra
$K_s$ given in Section \ref{sect:ks} below.
\end{remark}

The behaviour of $\rfunc_s$ with respect to tensor products  is
slightly more complicated than for $\rfunctilde_s$.

\begin{proposition}
\label{prop:rfuncs_suspension}
 For $M \in \obj \amod$, there are natural isomorphisms of $K_s ^+$-modules:
\begin{eqnarray*}
 \rfunc_s (\Sigma M) &\cong &\Sigma \euler_s \rfunc_s^- M \\
\rfunc_s^-(\Sigma M) &\cong &\Sigma \euler_s \rfunc_s M.
\end{eqnarray*}
In particular, $\rfunc_s (\Sigma M) $ is a sub $\cala$-module of $\Sigma
\rfunc_s M$.
\end{proposition}

By construction, for $M \in \obj \amod$, there are natural inclusions $\rfunc_s
M \hookrightarrow \rfunctilde_s M \hookrightarrow \Philocal_s \largetensor M$,
and an induced natural inclusion $\rfunc_1^{\circ s} M\hookrightarrow
\rfunctilde_1 ^{\circ s}M$, for $s \in \nat$.

\begin{proposition}
\label{prop:canonical_embeddings}
Let $s$ be a natural number.
\begin{enumerate}
\item
There is a natural embedding $\rfunc_s \hookrightarrow \rfunc_1^{\circ s}$ which
fits into a commutative diagram of natural transformations of functors with
values in $K^+_s \dash \amod$:
\[
 \xymatrix{
\rfunc_s 
\ar@{^(->}[r]
\ar@{^(->}[d]
&
\rfunc_1^{\circ s} 
\ar@{^(->}[d]
\\
\rfunctilde_s 
\ar@{^(->}[r]
&
\rfunctilde_1^{\circ s} 
\ar@{^(->}[r]
&
\Philocal_s \largetensor - .
}
\]
\item
For $M \in \obj \amod$,  $
\rfunc_{s+2} M
=
\bigcap _{i+j = s}
\rfunc_1 ^{\circ i} \rfunc_2 \rfunc_1^{\circ j} M.
$ as submodules of $\rfunc_1 ^{\circ s} M$.
\end{enumerate}
\end{proposition}

\begin{proof}
The first statement is a straightforward verification. The second follows from
the corresponding
result for $GL_s$-invariants:
\[
 H^* (BV_s)^{GL_s}  \cong
\bigcap _{i+j = s-2}
H^* (BV_s)^{GL_2^{i,j}} 
\]
where  $GL_2 \cong GL_2^{i,j}
\subset GL_s$ acts on the factor $\field_p^{\oplus 2}$ in  the  direct
sum decomposition induced by the basis
$
V_s \cong 
\field_p^{\oplus i}
\oplus
\field_p^{\oplus 2}
\oplus
\field_p^{\oplus j}.
$
\end{proof}

\begin{remark}
 The second statement of the Proposition exhibits the quadratic nature of the
functors $\rfunc_s$:  they are determined by the
functor $\rfunc_1$ and the inclusion $\rfunc_2 \hookrightarrow \rfunc_1
\rfunc_1$.
\end{remark}

As a consequence, one obtains:

\begin{corollary}
 \label{cor:rfunc_composites}
For $s,t \in \nat$ and $M \in \obj \amod$,
\begin{enumerate}
 \item
the inclusion $\rfunc_{s+t} \hookrightarrow \rfunc_1^{\circ s+t}$ factors as 
$
 \rfunc_{s+t} \hookrightarrow 
\rfunc_s \rfunc _t 
\hookrightarrow
\rfunc_1^{\circ s+t};
$
\item
the commutative diagram
\[
 \xymatrix{
\rfunc_{s+2} M
\ar@{^(->}[r]
\ar@{^(->}[d]
&
\rfunc_1 \rfunc_{s+1} M 
\ar@{^(->}[d]
\\
\rfunc_{s+1} \rfunc_{1} M
\ar@{^(->}[r]
&
\rfunc_1 \rfunc_{s} \rfunc_{1} M 
}
\]
is cartesian.
\end{enumerate}
\end{corollary}

\section{The unstable algebra $K_s$}
\label{sect:ks}

The explicit description of the functor $\rfunc_s$, for $s \in \nat$, is
completed
 by identifying the unstable algebra $K_s$ and, via the eigenspace
decomposition,
the unstable
 algebra $K_s^+$ and the $K_s^+$-module $K_s^-$. Of necessity, this involves
some calculation. A direct
 approach is taken by Zarati in \cite{zarati_these}; here an alternative method
is indicated, based on the results of M{\`u}i \cite{mui_cohom_operations}.

%%%%%%%%%%%%%%%%%%%%%%%%%%%%%%%%%%%%%%%%%%%%%%%%%%%%%%%%%%%%%%%%%%%%%%%%%%%
\subsection{The unstable algebras $K_s, K_s^+$ and the module $K_s^-$}

Recall the unstable algebra of Theorem \ref{thm:sltildes_inv}, which also
gives the identification used in the following statement.

\begin{proposition}
(Cf. \cite[Proposition 2.4.7.2]{zarati_these}.)
\label{prop:Ks_rfunctilde1}
	For $s \in \nat$,
\begin{eqnarray*}
 K_{s} &=& \mathrm{Image} \{ H^* (B \alt_{p^s}) \rightarrow H^*(BV_s) \} \\
&=& \Lambda (\oddgen_{s,i} | 0 \leq i \leq s-1 ) \otimes \field _p
[\euler_s, Q_{s,j} | 1 \leq j \leq s-1 ].
\end{eqnarray*}
\end{proposition}

\begin{proof} (Indications.)
For the purposes of this proof, define a functor $\rfunctilde'_1 : \amod
\rightarrow H^* (BV_1) \dash \amod$ by
extension of scalars
\[
 \rfunctilde'_1 M := H^* (BV_1) \otimes_{K_1} \rfunctilde_1 M .
\]
It is clear that $\rfunctilde'_1$ restricts to an endofunctor of $\unst$ which
has good multiplicative properties. In
particular, if $K$ is an unstable algebra, then
$\rfunctilde'_1 K$ is an unstable algebra and there is an inclusion of unstable
algebras
 $
 \rfunctilde_1 K \hookrightarrow \rfunctilde'_1 K.
$

In \cite[Theorem 3.9]{mui_cohom_operations}, M{\`u}i defines an algebra
$\mathscr{M}_p (s)$ for $s\in \nat$, which
is a free graded algebra
\[
 \mathscr{M}_p (s)
\cong
\Lambda (U_1, \ldots , U_s) \otimes \field_p [V_1, \ldots , V_s]
\]
on explicit generators defined in \cite[Section 2]{mui_cohom_operations}.
Combining \cite[Proposition 2.6]{mui_cohom_operations}
with  \cite[Theorem 3.8]{mui_cohom_operations},
one can show that $\mathscr{M}_p (s) $ is isomorphic to $(\rfunctilde'_1)
^{\circ s} \field_p$ (hence is an unstable
subalgebra of $H^* (BV_s)$). In particular, there
is an inclusion of unstable algebras
 $
 \rfunctilde_1^{\circ s} \field_p \hookrightarrow \mathscr{M}_p (s).
$
By \cite[Lemma 3.11]{mui_cohom_operations}
\[
 \mathscr{M}_p (s) \cap H^* (BV_s)^{\sltilde_s}
\cong
\Lambda (\oddgen_{s,i} | 0 \leq i \leq s-1 ) \otimes \field _p [\euler_s,
Q_{s,j} | 1 \leq j \leq s-1 ].
\]

Since  $\mathscr{M}_p (s) = (\rfunctilde'_1)
^{\circ s} \field_p$,  it follows that $\mathscr{M}_p (s) \cap H^*
(BV_s)^{\sltilde_s} = \rfunctilde_1 ^{\circ s} \field_p$, which is $K_s$, by
definition.
\end{proof}

\begin{nota}
\label{nota:Mtilde_I_odd_even}
For $0< s \in \nat$ and $I \subset \{ 0, \ldots , s-1\}$,
\begin{enumerate}
 \item
let $\oddgen_{s,I}$ denote the monomial $\prod _{i \in I} \oddgen_{s,i}$, where
the factors in the product are ordered by the natural order on $I$ (in particular $\oddgen_{s,\emptyset}=1$);
\item
write $I\even$ (respectively $I\odd$) if $|I|$ is even (resp. odd) and
$\oddgen_{s,I\even}$ (resp. $\oddgen_{s,I\odd}$) for the associated monomials.
\end{enumerate}
\end{nota}

\begin{corollary}
\label{cor:Kplus_Kminus}
For $s \in \nat$, there is an inclusion of unstable algebras $\field_p [Q_{s,j}
| 0 \leq j \leq s-1]\hookrightarrow K_s^+$ and 
\begin{enumerate}
 \item
$K_s^+$ is the free $\field_p [Q_{s,j} ]$-module on
 $\{
\euler_s \oddgen_{s,  I \odd }, \oddgen_{s, I\even} \}$;
\item
$K_s^-$ is the free $\field_p [Q_{s,j} ]$-module on $\{
\euler_s \oddgen_{s,  I \even }, \oddgen_{s, I\odd} \}$.
\end{enumerate}
\end{corollary}

\begin{proof}
From their construction (cf. Definition \ref{def:invariants} and Proposition \ref{prop:GL_sltilde_invariance}) it is clear that $\euler_s$ and the $\oddgen_{s,i}$ belong to $K_s^-$ and that the $Q_{s,j}$ belong to $K_s^+$, hence  the stated conclusion follows from  Proposition \ref{prop:Ks_rfunctilde1}.
\end{proof}

\section{The chain complex}
\label{sect:chain_complex}
%%%%%%%%%%%%%%%%%%%%%%%%%%%%%%%%%%%%%%%%%%%%%%%%%%%%%%%%%%%%%%%%
\subsection{The complex}

Recall from Definition \ref{def:d_M} the natural transformation $d_M : \rfunc_1
 M \rightarrow \Sigma^{-1} M$.

\begin{definition}
\label{def:diff_ds}
 Let $d_s : \rfunc_s \Sigma M \rightarrow  \rfunc_{s-1}M $ be the composite in
$\amod$:
\[
 \rfunc_s \Sigma M \hookrightarrow \rfunc_{s-1} (\rfunc_{1} \Sigma M)
\stackrel{\rfunc_{s-1} d_{\Sigma M}}{\longrightarrow}
\rfunc_{s-1}M.
\]
\end{definition}

\begin{definition}
 For $M \in \obj \amod$, let $\dcx_\bullet M$ denote the chain complex in
$\amod$ with 
$
 \dcx _s M := \Sigma \rfunc_s \Sigma^ {s-1}M ,
$ and  differential $\dcx_s M \rightarrow \dcx_{s-1} M$ induced by $d_s :
\rfunc_s \Sigma \rightarrow \rfunc_{s-1}$.
\end{definition}

\begin{remark}
That this is indeed a chain complex (namely $d^2= 0$) is established in
Proposition \ref{prop:chcx}. 
\end{remark}

The chain complex $\dcx_\bullet M$ has the form
\[
 \ldots
\rightarrow 
\Sigma \rfunc_3 (\Sigma^2 M)
\rightarrow
\Sigma \rfunc_2 (\Sigma M)
\rightarrow
\Sigma \rfunc_1 M
\rightarrow
M \rightarrow 0.
\]

\begin{proposition}
\label{prop:dcx_exact}
 The functor $\dcx_\bullet : \amod \rightarrow \chcx_{\geq 0} \amod
$ is
exact. 
\end{proposition}

\begin{proof}
Follows from  the exactness of $\rfunc_s$, by Proposition
\ref{prop:rfunc_s_properties}.
\end{proof}

%%%%%%%%%%%%%%%%%%%%%%%%%%%%%%%%%%%%%%%%%%%%%%%%%%%%%%%%%%%%%%%%%%%%%%%%%%%%%%%%
\subsection{The vanishing of $d^2$}
\label{subsect:d2_vanish} 
The fact that $d_s$ defines a chain complex
is a consequence of the relationship between the Steenrod algebra and the
theory of invariants established by Singer \cite{singer_invt_lambda} for $p=2$
and developed by Hung and Sum \cite{hung_sum} (amongst others) for $p$ odd.
This can be proved by direct calculation; here an approach is taken which 
exhibits  the relationship with the chain complex considered by
\cite{hung_sum}.

\begin{lemma}
\label{lem:T2_invariance}
 The natural transformation $\rfunctilde_1 \rfunctilde_1 M \hookrightarrow
\Philocal_2 \largetensor M$ factors across the inclusion $\boldDelta_2
\largetensor M \hookrightarrow \Philocal_2 \largetensor M$. 
\end{lemma}

\begin{proof} Since $\st_2$ takes values in
$\Philocal_2^{\sltilde_2} \largetensor M$, by Proposition
\ref{prop:rtilde_iterate_Philocal_s} it suffices to show that
$\rfunctilde_1 \rfunctilde_1 \field \subset H^* (BV_2)^{T_2}$. By the exactness
of $\rfunctilde_1$, it suffices to show that $\rfunctilde_1 H^* (BV_1)$ lies in
$H^* (BV_2)^{T_2}$.

Using the multiplicativity of $\st_1$, one reduces further to showing that the
elements $\st_1 (x)$ and $\st_1 (y)$  are $T_2$-invariant, where $x, y$ are the
algebra generators of $H^* (BV_1)$; this can be verified by direct calculation.
\end{proof}

The Singer evaluation $\boldDelta_1= \Philocal_1 \rightarrow \Sigma^{-1} \field$
induces a
morphism 
$ 
 \boldDelta_2 \rightarrow \boldDelta_1 \otimes \Sigma^{-1} \field
$ 
 \cite[Definition 3.5 and Proposition 3.6(i)]{hung_sum}. For $M \in \obj \amod$,
by forming $-\largetensor M$,  this defines
$
 \boldDelta_2 \largetensor M \stackrel{\partial_{\boldDelta}}{\longrightarrow}
\boldDelta_1 \largetensor \Sigma^{-1} M
$
\cite[Definition 4.1(i)]{hung_sum}.

The following result relates  $\partial_\boldDelta$ to the
differential of the chain complex.

\begin{lemma}
\label{lem:compatibility_diff}
 For $M \in \obj \amod$, the following diagram commutes:
\[
 \xymatrix{
\rfunctilde_1 \rfunctilde_1 M 
\ar[r]^{\rfunctilde_1 d_M} 
\ar@{^(->}[d]
&
\rfunctilde_1 (\Sigma^{-1} M) 
\ar@{^(->}[d]
\\
\boldDelta_2 \largetensor M 
\ar[r]_{\partial _\boldDelta} 
&
\boldDelta_1 \largetensor \Sigma^{-1} M.
}
\]
\end{lemma}

\begin{proof}
It is straightforward to check that the morphisms are $K_1 = \rfunctilde_1
\field$-linear, with respect to the evident module structures. Hence it
suffices to consider the module generators $\st_1 (z) \in \rfunctilde_1
\rfunctilde_1 M$, for $z \in \rfunctilde_1 M$. A further simplification is
obtained by localizing, inverting $\euler_1 \in K_1$, so that $\st_1(z)$ can be
replaced by
$S_1 (z)$.

Using the weak continuity of the functors, we may assume that $M$ is bounded
above, hence  $z$ is an element of $\Philocal_1 \otimes
M$. Consider a general  element $\phi \otimes m \in \Philocal_1 \otimes M$; a
straightforward generalization of Lemma \ref{lem:T2_invariance} shows that $S_1
(\phi \otimes m)$ lies in $\boldDelta_2 \largetensor M$, hence it suffices to
prove the more general compatibility replacing $z$ by $\phi \otimes m$.  

Passing around the top of the diagram sends $S_1 (\phi \otimes m)$ to $S_1
((\partial\phi)m)$, interpreted in the obvious way. To calculate the passage
around the bottom of the diagram, one uses the multiplicativity of $S_1$. 
That one obtains the same element in $\boldDelta_1 \largetensor
\Sigma^{-1} \field $ follows  from the commutativity of the diagram
\[
	\xymatrix{
\Philocal_1
\ar[r]^{S_1}
\ar[d]_\partial
&
\Philocal_1 \largetensor \Philocal_1
\ar[d]^{1 \largetensor \partial}
\\
\Sigma^{-1} \field_p
\ar[r]^(.4)\eta
&
\Philocal_1 \otimes\Sigma^{-1} \field_p,
}
\]
where $\eta : \Sigma^{-1} \field_p \rightarrow \Philocal_1 \otimes
\Sigma^{-1} \field_p$ is induced by the unit of $\Philocal_1$. This is
simply a restatement of the fact that $\partial : \Philocal_1 \rightarrow
\Sigma^{-1} \field$ is $\cala$-linear.
\end{proof}

\begin{lemma}
\label{lem:triviality_dd}
 For $M \in \obj  \amod$, the composite $
 \rfunc_2 \Sigma ^2 M 
\stackrel{d_2}{\rightarrow}
\rfunc_1 \Sigma M
\stackrel{d_1}{\rightarrow}
M 
$ is trivial.
\end{lemma}

\begin{proof}
 The commutative diagram of Lemma \ref{lem:compatibility_diff} restricts using
the inclusion $\rfunc_1 \hookrightarrow \rfunctilde_1$; the result  fits
into the commutative diagram:
\[
 \xymatrix{
\rfunc_2 \Sigma^2 M
\ar[d]
\ar[r]
&
\rfunc_1 \rfunc_1 \Sigma^2 M 
\ar[d]
\ar[r]
&
\rfunc_1 \Sigma M
\ar[d]
\ar[r]
&
M
\ar@{=}[d]
\\
\boldGamma_2 \largetensor \Sigma^2 M
\ar[r]
\ar@/_1pc/[rrd]
&
\boldDelta_2 \largetensor \Sigma^2 M 
\ar[r]
&
\boldDelta_1 \largetensor \Sigma M 
\ar[r]
&
M
\\
&&
\boldGamma_1 \largetensor \Sigma M 
\ar[u]
\ar@/_1pc/[ur]
}
\]
where the commutativity of the bottom part of the diagram follows from the
first statement of \cite[Proposition 3.6(ii)]{hung_sum}, with the curved arrows
induced by $\partial_2 : \boldGamma_2 \rightarrow \boldGamma _1 \otimes
\Sigma^{-1} \field$ and $\partial_1 : \boldGamma_1  \rightarrow \Sigma^{-1}
\field$, in the notation of {\em loc. cit.} (but making the desuspensions
explicit). Finally,  \cite[Proposition 3.6(ii)]{hung_sum} implies that the
composite of the
curved arrows is zero, which completes the proof.
\end{proof}

\begin{proposition}
\label{prop:chcx}
For $M \in \obj \amod$, $\dcx_\bullet M$ is a chain complex in $\amod$.
\end{proposition}

\begin{proof}
A straightforward (and standard) reduction, based on the quadratic nature of the
construction of $\dcx_\bullet M$,  shows
that it is sufficient to prove that the composite 
$
 \rfunc_2 \Sigma ^2 M 
\stackrel{d_2}{\rightarrow}
\rfunc_1 \Sigma M
\stackrel{d_1}{\rightarrow}
M 
$
is trivial; this is proved as Lemma  \ref{lem:triviality_dd} above.
\end{proof}

\begin{remark}
\label{rem:Gamma_plus}
The above analysis of the differential also serves to establish that, 
 for $M \in \obj \amod$, there is a natural monomorphism of chain complexes 
$
 \dcx_\bullet M \hookrightarrow \Gamma^+ _\bullet M,
$ 
where $\Gamma^+_\bullet M$ is the chain complex of \cite[Section 4]{hung_sum}.
\end{remark}

\section{The natural transformations $\rho_s$ and fundamental short exact
sequences}
\label{sect:ses}

%%%%%%%%%%%%%%%%%%%%%%%%%%%%%%%%%%%%%%%%%%%%%%%%%%%%%%%%%%%%%%%%%%
\subsection{The higher natural transformation $\rho_s$}
\label{subsect:rhos}

Recall from Definition \ref{def:rho1} the morphism $\rho_1 : \rfunc_1 M
\rightarrow \Sigma^{-2} \Phi \Sigma M$. 

\begin{definition}
 For $M \in \obj \amod$ and $0< s \in \nat$, let $\rho_s :
\rfunc_s M \rightarrow \Sigma^{-2} \Phi \Sigma
\rfunc_{s-1}M$ be the natural
transformation given by the composite:
\[
 \rfunc_s M
\hookrightarrow
\rfunc_1 \rfunc_{s-1} M
\stackrel{\rho_1} {\rightarrow}
 \Sigma^{-2} \Phi \Sigma \rfunc_{s-1}M.
\]
\end{definition}

The morphism $\rho_s$ features in the short exact sequence of complexes which
is the key to the proof of the main result of the paper.

%%%%%%%%%%%%%%%%%%%%%%%%%%%%%%%%%%%%%%%%%%%%%%%%%%%%%%%%%%%%%%%%%%%%%%%%%%%%%%
\subsection{Linearity of $\rho_s$ over $\field_p [Q_{s,i}]$}

The following is straightforward:

\begin{lemma}
\label{lem:Phi-module-structure}
 Let  $K$ be an unstable algebra concentrated in even degrees. For  $M \in K
\dash \amod$, 
$\Sigma^{-2} \Phi \Sigma M$ has a  natural  $\Phi K$-module structure defined by
$
	(\Phi k )( \Sigma^{-2} \Phi \Sigma m) = \Sigma ^{-2} \Phi \Sigma (km).
$
\end{lemma}

\begin{lemma}
\label{lem:dickson_phi_s}
\cite[Proposition 2.9]{kameko_mimura}
For $0< s\in \nat$  and a linear monomorphism  $i : V_{s-1}\hookrightarrow V_s
$,
the following commutes in unstable algebras:
\[
\xymatrix{
\field_p [Q_{s,0}, \ldots, Q_{s, s-1}]
\ar[r]^{\phi_s}
\ar@{^(->}[d]
&
\field_p [Q_{s-1,0}, \ldots, Q_{s-1, s-2}]
\ar@{^(->}[d]
\\
H^* (BV_s)
\ar@{->>}[r]_{i^*}
&
H^* (BV_{s-1})
}
\]
 where
$
 \phi_s Q_{s,0} =0$ and  $\phi_s Q_{s,j} =
Q_{s-1, j-1} ^p$, for  $ j >0$. 
In particular, $\phi_s$ factorizes as:
\[
 \field_p [Q_{s,0}, \ldots, Q_{s, s-1}]
\stackrel{\overline{\phi}_s}{\twoheadrightarrow}
\Phi
\field_p [Q_{s-1,0}, \ldots, Q_{s-1, s-2}]
\hookrightarrow
\field_p [Q_{s-1,0}, \ldots, Q_{s-1, s-2}].
\]
\end{lemma}

Corollary \ref{cor:Kplus_Kminus} provides an inclusion of unstable algebras
$\field_p [Q_{s,i}] \hookrightarrow K_s^+$, so that $\rfunc_s M $ is an object
of $\field_p[Q_{s,i}]\dash \amod$;  $\Sigma^{-2} \Phi \Sigma \rfunc_{s-1} M$ is
also an object of $\field_p[Q_{s, i}]\dash\amod$ by restriction along
$\overline{\phi}_s$ of the $\Phi \field_p[Q_{s-1,j}]$-module structure provided
by Lemma \ref{lem:Phi-module-structure}.

The following result  can be compared with \cite[Sections 4.3.2 and
4.6]{zarati_these}.

\begin{proposition}
\label{prop:linearity_rho}
For $M \in \obj \amod$ and $0<s\in \nat$,
$
	\rho_{s} : \rfunc_{s} M \rightarrow \Sigma^{-2} \Phi \Sigma
\rfunc_{s-1} M
$
is a morphism of $\field_p [Q_{s,i}| 0  \leq i \leq s-1]$-modules.
\end{proposition}

\begin{proof}
By Proposition \ref{prop:rfunc1_mult_properties}, $\rfunc_1 \field_p
[Q_{s-1,j}]$
is an unstable algebra and there is an inclusion of unstable algebras
$\field_p [Q_{s,i}] \hookrightarrow \rfunc_1 \field_p[Q_{s-1,j}]$. It is a
standard calculation to show that
\begin{eqnarray*}
	Q_{s,0} & = & Q_{1,0} \st_1 (Q_{s-1}, 0) \\
	Q_{s,i} & = & Q_{1,0}^{p^i} \st_1 (Q_{s-1,i} ) + \st_1 (Q_{s-1, i-1}),
\end{eqnarray*}
where $0< i < s$.

To prove the result, by restriction along the $\field_p [Q_{s,i}]$-linear
inclusion $\rfunc_s M \hookrightarrow \rfunc_1 \rfunc_{s-1}M$,  it suffices to
show that $\rho_1 : \rfunc_1 N \rightarrow \Sigma^{-2} \Phi \Sigma N$ is
$\field_p [Q_{s,i}]$-linear for $N \in \obj \field_p[Q_{s-1,j}]\dash \amod$,
where $\rfunc_1 N$ is a module by restriction of the $\rfunc_1 
\field_p[Q_{s-1,j}]$-module structure of Proposition
\ref{prop:rfunc1_mult_properties} along $\field_p [Q_{s,i}] \hookrightarrow
\rfunc_1 \field_p[Q_{s-1,j}]$ and the module structure of $\Sigma^{-2} \Phi
\Sigma N$ is as above.

Using the explicit description of $\rho_1$ given in  Definition \ref{def:rho1},
this is a straightforward consequence of the multiplicativity of $\st_1$ and the
observation that, modulo $Q_{1,0}$,  $Q_{s,0} \equiv 0 $ and $Q_{s,i}\equiv
\st_1 (Q_{s-1, i-1})$ for $i>0$, by the above equations.
\end{proof}

%%%%%%%%%%%%%%%%%%%%%%%%%%%%%%%%%%%%%%%%%%%%%%%%%%%%%%%%%%%%%%%%%%%%%%%%%%%%%%
\subsection{The fundamental short exact sequence}

\begin{proposition}
\label{prop:rho_s_fund_exact}
(Cf. \cite[Définition-Proposition 4.5.1]{zarati_these}.)
For $M \in \obj \amod$ and $0< s \in \nat$, there is a natural short
exact sequence in $\field_p [Q_{s,i}]\dash \amod$:
\[
 0
\rightarrow
\Sigma^{-1}
\rfunc_s (\Sigma M)
\rightarrow
\rfunc_s M
\stackrel{\rho_s}{\rightarrow}
\Sigma^{-2} \Phi \Sigma \rfunc_{s-1}M
\rightarrow
0.
\]
\end{proposition}

A key reduction is given by the following result:

\begin{lemma}
\label{lem:rho_s_comm_diag}
For $M \in \obj \amod$ and $0< s \in \nat$, there is a commutative diagram of
exact sequences
\[
	\xymatrix{
0
\ar[r]
&
\Sigma^{-1}\rfunc_s (\Sigma M)
\ar[r]
\ar@{^(->}[d]
&
\rfunc_s M
\ar[r]^(.4){\rho_s}
\ar@{^(->}[d]
&
\Sigma^{-2} \Phi \Sigma \rfunc_{s-1}M
\ar@{=}[d]
\\
0
\ar[r]
&
\Sigma^{-1}\rfunc_1 \Sigma (\rfunc_{s-1} M)
\ar[r]
&
\rfunc_1 \rfunc_{s-1} M
\ar[r]^(.45){\rho_1}
&
\Sigma^{-2} \Phi \Sigma \rfunc_{s-1}M
\ar[r]
&
0.
}
\]
\end{lemma}

\begin{proof}
	The bottom row is provided by Proposition \ref{prop:rho1_ses} and
$\rfunc_s M \hookrightarrow \rfunc_1 \rfunc_{s-1} M $ is the inclusion of
Corollary \ref{cor:rfunc_composites}. A standard calculation (compare
\cite[Définition-Proposition 4.5.1]{zarati}) shows that the left hand square is
a pullback. The exactness of the top row follows. 
\end{proof}

The proof of Proposition \ref{prop:rho_s_fund_exact} is  completed by showing
that $\rho_s$ is surjective. This relies upon the knowledge of the structure of
$K_s^+$ and $K_s^-$ (see Corollary \ref{cor:Kplus_Kminus}). Indeed, the
necessary calculational input is already contained within the
calculation of $K_s$ in Proposition \ref{prop:Ks_rfunctilde1}. This allows for
the following quick  proof.

\begin{proof}[of Proposition \ref{prop:rho_s_fund_exact}]
	The functors appearing in the putative short exact sequence are weakly
continuous, exact and commute with colimits. Hence, by Proposition
\ref{prop:properties_weak_cts}, one reduces to the case where $M$ is of the form
$\Sigma^t \field_p$, for $t \in \zed$. Moreover, to prove exactness, it suffices
to consider the underlying graded vector spaces; these are of finite type, hence
by the exactness of the top row of Lemma \ref{lem:rho_s_comm_diag}, it suffices
to check that the Euler-Poincaré characteristic of the sequence is zero. This is
a direct verification.
\end{proof}

\begin{remark}
An alternative proof is to show the surjectivity of $\rho_s$ directly, as in
\cite[Définition-Proposition 4.5.1]{zarati_these}, using the explicit
identification of $K_s^+$ and $K_s^-$ and exploiting the
$\field_p[Q_{s,i}]$-linearity established in Proposition
\ref{prop:linearity_rho}.

By an argument similar to that of Corollary \ref{cor:Kplus_Kminus}, 
the cokernel of $\Sigma^{-1} \rfunc_s \Sigma M \hookrightarrow \rfunc_s M $ is a free $\field_p [Q_{s,j} | 1\leq j \leq
s-1]$-module  on elements of the form  $\oddgen_{s, I\even } \st_s (m{\even})$ or  $\oddgen_{s, I\odd }  \st_s (m{\odd})$
as $m$ runs through a basis of $M$, with the superscript representing the parity
of $|m|$. 

The elements $\oddgen_{s,I}$  can be written  in one of the following
forms: 
\begin{eqnarray*}
\oddgen_{s,0} \oddgen_{s, J\odd} \mathrm{\ or \ } \oddgen_{s,J\even}, \  |I|\equiv 0 \  (2)\\
\oddgen_{s,0} \oddgen_{s,J\even} \mathrm{\  or \ } \oddgen_{s,J\odd},\  |I|\equiv 1 \  (2)\\
\end{eqnarray*}
where  $J\subset \{1, \ldots , s-1 \}$.

As in the proof of  Proposition \ref{prop:linearity_rho},  the multiplicativity
of $\st_1$ together with the identification of the inclusion $K_s
\hookrightarrow \rfunctilde_1 K_{s-1}$ give:
\begin{eqnarray*}
 \oddgen_{s,0} \oddgen_{s,J\odd} \st_s (m{\even})&\mapsto & \pm \Sigma^{-2} \Phi
\Sigma \euler_{s-1}
\oddgen_{s-1,\tilde{J}\odd}\st_{s-1} (m{\even})
\\
\oddgen_{s,J\even}\st_s (m{\even}) &\mapsto & \pm \Sigma^{-2} \Phi \Sigma
\oddgen_{s-1,\tilde{J}\even}
\st_{s-1} (m{\even})
\\
 \oddgen_{s,0} \oddgen_{s,J\even} \st_s (m{\odd})&\mapsto & \pm \Sigma^{-2} \Phi
\Sigma^{}
\euler_{s-1}\oddgen_{s-1,\tilde{J}\even} \st_{s-1} (m{\odd})
\\
\oddgen_{s,J\odd} \st_s (m{\odd})&\mapsto & \pm \Sigma^{-2} \Phi \Sigma
\oddgen_{s-1,\tilde{J}\odd}\st_{s-1} (m{\odd}),
\end{eqnarray*}
where, for $J\subset \{1, \ldots , s-1 \}$,  $\tilde{J}$ is the subset of $\{ 0,
\ldots , (s-1)-1 \}$ which is induced by the order preserving surjection $ \{1, \ldots , s-1 \}\twoheadrightarrow \{
0, \ldots , (s-1)-1 \}$.

The surjectivity of $\rho_s$ follows.
\end{remark}

%%%%%%%%%%%%%%%%%%%%%%%%%%%%%%%%%%%%%%%%%%%%%%%%%%%%%%%%%
\subsection{The short exact sequence of chain complexes}

\begin{theorem}
\label{thm:ses_complexes}
For $M \in \obj \amod$, there is a natural short exact sequence of chain
complexes in $\amod$:
\[
 0
\rightarrow
\Sigma ^{-1}\dcx_\bullet \Sigma M
\rightarrow
\dcx _\bullet M
\stackrel{\rho_\bullet}{\rightarrow}
\Sigma^{-1} \Phi \dcx_{\bullet - 1}\Sigma  M
\rightarrow
0,
\]
which, in homological degree $s$, is the suspension of
\[
 0
\rightarrow
\Sigma^{-1} \rfunc_s (\Sigma ^s M)
\rightarrow
\rfunc _s (\Sigma^{s-1} M)
\stackrel{\rho_s}{\rightarrow}
\Sigma^{-2} \Phi \Sigma \rfunc_{s-1} (\Sigma^{s-1}M)
\rightarrow
0.
\]
\end{theorem}

\begin{proof}
It is sufficient to show that the short exact sequences of Proposition
\ref{prop:rho_s_fund_exact} induce morphisms of chain complexes.
By construction, the monomorphisms are compatible  with the differential, hence
it suffices to show that the morphisms $\rho_s$ induce a morphism of chain
complexes.

The cases $s \leq 1$ are immediate, hence fix $s \geq 2 $ and set  $N :=
\Sigma^{s-1}M$. There is a natural commutative diagram in
$\amod$:
\[
 \xymatrix{
\rfunc_s N
\ar@{^(->}[r]
\ar@{^(->}[d]
&
\rfunc_1  \rfunc_{s-1} N
\ar@{^(->}[d]
\ar[r]^{\rho_1}
&
\Sigma^{-2} \Phi \Sigma \rfunc_{s-1} N
\ar@{^(->}[d]
\\
\rfunc_{s-1}\rfunc_1 N
\ar@{^(->}[r]
\ar[d]
&
\rfunc_1 \rfunc_{s-2} \rfunc_1 N
\ar[r]^{\rho_1}
\ar[d]
&
\Sigma^{-2} \Phi \Sigma \rfunc_{s-2}\rfunc_1 N
\ar[d]
\\
\rfunc_{s-1} \Sigma^{-1} N
\ar@{^(->}[r]
&
\rfunc _1 \rfunc_{s-2} \Sigma^{-1} N 
\ar[r]_{\rho_1}
&
\Sigma^{-2} \Phi \Sigma \rfunc_{s-2}\Sigma^{-1}N,
}
\]
in which the lower vertical arrows are induced by $\rfunc_1 N\stackrel{d}{
\rightarrow} \Sigma^{-1} N$ and the upper
part of the
 diagram corresponds to embedding the functor $\rfunc_s$ in composite functors
(see Corollary
\ref{cor:rfunc_composites}). The commutativity of the right hand side of the
diagram follows from the naturality of
$\rho_1$.

The top and bottom horizontal composites correspond respectively to $\rho_s $
and $\rho_{s-1}$, whereas the the left
and right hand vertical composites are (up to suspension) the differentials in
the respective chain complexes. The result follows.
\end{proof}

\section{Derived functors of destabilization}
\label{sect:proof}

 The proof of Theorem \ref{thm:deriv_destab}, the main result of the paper,  is
analogous to the approach taken by Singer \cite{singer_loops_II} to the study of
the derived functors of iterated loop functors.

%%%%%%%%%%%%%%%%%%%%%%%%%%%%%%%%%%%%%%%%%%%%%%%%%%%%%%%%%%%%%%%%%%
\subsection{First properties of the chain complex $\dcx_\bullet M$}

\begin{nota}
For $M \in \obj \amod$ and $s \in \nat$, write $\dhom_s M$ for $H_s
(\dcx_\bullet M)$.
\end{nota}

The following two lemmas are  straightforward:

\begin{lemma}
\label{lem:dhom0}
 For $s \in \nat$, $\dhom_s$ defines an additive  functor $\amod \rightarrow
\amod$ and  a short exact
sequence $0 \rightarrow K \rightarrow M \rightarrow Q \rightarrow 0$ of $\amod$
induces a long exact sequence in $\amod$:
\[
 \ldots \rightarrow \dhom_s K \rightarrow \dhom_s M \rightarrow \dhom_s Q
\rightarrow \dhom_{s-1} K \rightarrow \ldots \ \ .
\]
\end{lemma}

\begin{lemma}
\label{lem:les_dhom}
For $M \in \obj \amod$, the short exact sequence 
\[
 0
\rightarrow
\Sigma ^{-1}\dcx_\bullet \Sigma M
\rightarrow
\dcx _\bullet M
\stackrel{\rho_\bullet}{\rightarrow}
\Sigma^{-1} \Phi \dcx_{\bullet - 1}\Sigma  M
\rightarrow
0
\]
of Theorem \ref{thm:ses_complexes} induces a natural  long exact sequence in
$\amod$:
\[
 \ldots
\rightarrow
\Sigma^{-1} \dhom_s \Sigma M
\rightarrow
\dhom_s M
\rightarrow
\Sigma^{-1} \Phi \dhom_{s-1} \Sigma M
\stackrel{\lambda_{s-1}}{\rightarrow}
\Sigma^{-1} \dhom_{s-1} \Sigma M
\rightarrow \ldots  \ .
\]
\end{lemma}

The $\kappa_s$-connectivity of $\rfunc_s$ (see Proposition
\ref{prop:rfunc_s_properties}) implies the following:

\begin{lemma}
\label{lem:dhom_strong_conn}
 For $s \in \nat$ and $M \in \obj \amod$, 
 $
 |\dhom_s M | \geq 1 + p^s (|M| + s -1).
$
\end{lemma}

By construction of the chain complex and Proposition \ref{prop:coker_dM_destab}:

\begin{proposition}
\label{prop:D0_destab}
 For $M \in \obj \amod$,   $\dhom_0 M \cong
\destab M$.
\end{proposition}

%%%%%%%%%%%%%%%%%%%%%%%%%%%%%%%%%%%%%%%%%%%%%%%%%%%%%%%%
\subsection{The main results}

The following provides the input to the  delta-functor type argument used
to prove Theorem \ref{thm:deriv_destab}:

\begin{proposition}
\label{prop:delta_functor}
For $t \in \zed$ and  $0< s \in \nat $, $\dhom_s (\Sigma^t \cala) =0$.
\end{proposition}

\begin{proof}
The proof is by induction on $s$, starting with $s=1$. In low degrees, the long
exact sequence of Lemma \ref{lem:les_dhom} has the form
\[
\Sigma^{-1}  \dhom_1 \Sigma M
\rightarrow
\dhom_ 1 M
\rightarrow
\Sigma^{-1} \Phi \destab \Sigma M
\stackrel{\Sigma^{-1}\lambda}{\rightarrow}
\Sigma^{-1} \destab \Sigma M,
\]
using Proposition \ref{prop:D0_destab}  to identify $\dhom_0$ with $\destab$ and
Proposition
\ref{prop:connecting_lambda} to
identify the connecting morphism. For $M= \Sigma^t \cala$,  the  morphism
$\Sigma^{-1} \lambda$ is the desuspension of
\[
 \lambda : \Phi F (t+1) \rightarrow F (t+1),
\]
since $\destab \Sigma (\Sigma^t \cala)$ is the free unstable module $F(t+1)$.
This morphism is injective (see \cite[Lemme 3.1.1]{zarati_these} or
\cite{li_these}), hence
$
 \Sigma^{-1} \dhom_1 (\Sigma^{t+1} \cala ) \twoheadrightarrow \dhom_1 (\Sigma^t
\cala), 
$
is surjective. It follows, using $\kappa_1$-connectivity (cf. Lemma
\ref{lem:dhom_strong_conn}), that $\dhom_1 (\Sigma^t \cala)=0$,  $ \forall t$.

The inductive step uses a similar argument: by induction, we may suppose that
$\dhom_{s-1} \Sigma^t \cala$ is zero  $\forall t$. Hence, by  Lemma
\ref{lem:les_dhom}, 
$
 \Sigma^{-1} \dhom_s \Sigma^{t+1} \cala \twoheadrightarrow \dhom_s \Sigma^t
\cala 
$
is surjective 
 $\forall t$. Again, it follows from the connectivity result Lemma
\ref{lem:dhom_strong_conn} that $\dhom_s (\Sigma^t \cala)=0$, for all
integers $t$.
\end{proof}

The main result of the paper follows, as for the proof of \cite[Theorem
6.6]{singer_loops_II}.

\begin{theorem}
\label{thm:deriv_destab}
 For $M \in \obj \amod$ and $s \in \nat$, there is a natural isomorphism
\[
 \dhom_s M \cong \destab_s M.
\]
\end{theorem}

\begin{remark}
 The theorem shows, in particular, that the homology $H_s (\dcx_\bullet M) $ is
an unstable module, which is not
transparent from the construction.
\end{remark}

\begin{corollary}
\label{cor:stronger_connectivity}
 For $s \in \nat$  and $M \in \obj \amod$, 
$
 |\destab_s M | \geq 1 + p^s (|M| + s -1).
$
\end{corollary}

Theorem \ref{thm:deriv_destab}  recovers the main results of
\cite{zarati_these}:

\begin{corollary}
 \cite[Théorème 2.5]{zarati_these}
\label{cor:unstable_rs}
For $M \in \obj \unst$ and $s \in \nat$, there is a natural isomorphism
$
 \destab_s (\Sigma^{1-s}M ) \cong \Sigma \rfunc_s  M.
$
\end{corollary}

\begin{proof}
 The relevant portion of the chain complex $\dcx_\bullet \Sigma^{1-s} M $ is
\[
 \Sigma \rfunc_{s+1} \Sigma M
\rightarrow
\Sigma \rfunc _s M
\rightarrow
\Sigma
\rfunc _{s-1}  \Sigma^{-1} M;
\]
the fact that $M$ is unstable implies that the two differentials are trivial.
\end{proof}

Recall that the loop functor $\Omega : \unst \rightarrow \unst $ is the left
adjoint to  suspension  $\Sigma$  and $\Omega_1$
is the unique non-trivial higher left derived functor of $\Omega$. There is a 
short exact sequence for calculating the composite of derived functors of
destabilization with desuspension (cf. \cite{zarati_these}),
 for $M \in \obj  \unst$ and $s \in \nat$:
\[
0
\rightarrow
 \Omega \destab_s M
\rightarrow
\destab_s \Sigma^{-1} M
\rightarrow
\Omega_1 \destab_{s-1} M
\rightarrow
0.
\]

\begin{corollary}
\label{cor:destab_s_Sigma_-s}
(Cf. \cite[Section 4.8]{zarati_these}.)
For $M \in \obj \unst$  and $0< s \in \nat$, there is a natural short exact
sequence of unstable
modules
\[
 0
\rightarrow
\rfunc_s M
\rightarrow
\destab_s \Sigma ^{-s}M
\rightarrow
\Omega_1 \destab_{s-1} \Sigma^{1-s} M
\rightarrow
0.
\]
\end{corollary}

\begin{remark}
As observed by Zarati in \cite[Remarque 4.8]{zarati_these}, the morphism
$\rfunc_s M
\rightarrow \destab_s \Sigma ^{-s}M$ is not in general an isomorphism; for
example,  $\rfunc_s \field
\rightarrow \destab_s \Sigma^{-s} \field$ is not an isomorphism for $s \gg 0$.
\end{remark}

\section{Module structures}
\label{sect:module}

For  $s\in \nat$, $K_s^+$ is a sub-unstable algebra of $H^* (BV_s)$ and, for any
$\cala$-module $N$, $\rfunc_s N$ is naturally an object of $K_s^+\dash \amod$.
The purpose of this section is to show
that some of this structure passes to the derived functors $\destab_s$.

%%%%%%%%%%%%%%%%%%%%%%%%%%%%%%%%%%%%%%%%%%%%%%%%%%%%%%%%%%%%%%%%%%
\subsection{Linearity results}
\label{subsect:linearity}

As in Section \ref{subsect:d2_vanish}, the work of Hung and Sum  can be used to
analyse the chain complex $\dcx_\bullet M$, generalizing Lemma
\ref{lem:compatibility_diff} as follows. The multiplicative coproduct
$\psi_{s-1, 1}: \boldDelta_{s} \rightarrow \boldDelta_{s-1} \otimes
\boldDelta_1$ of \cite[Section 3]{hung_sum}, restricts to a coproduct
$\psi_{s-1,1} : \boldGamma_{s} \rightarrow \boldGamma_{s-1} \otimes
\boldGamma_1$. By the identification of the algebra $\boldGamma_s$ (cf.
Proposition \ref{prop:Gamma_Delta}), this is determined by the following special
case of \cite[Proposition 3.3]{hung_sum}.

\begin{lemma}
\label{lem:calculate_psi_s-1_1}
 For $0< s\in \nat$:
 \begin{eqnarray*}
\psi_{s-1,1} Q_{s,j} &=&
\left\{
\begin{array}{ll}
 Q_{s-1, 0}^p \otimes Q_{1,0} & j = 0 \\
Q_{s-1,0}^{p-1}Q_{s-1,j} \otimes Q_{1,0} + Q_{s-1, j-1}^p \otimes 1 &  j>0; \\
\end{array}
\right.
\\
\psi_{s-1,1} R_{s,j} &=&
\left\{
\begin{array}{ll}
Q_{s-1,0}^{p-1}R_{s-1,j} \otimes Q_{1,0} + Q_{s-1, 0}^{p-1}Q_{s-1,j} \otimes
R_{1,0} & 0\leq j < s-1\\
Q_{s-1, 0}^{p-1} \otimes R_{1,0}&  j =s-1.
\end{array}
\right.
\end{eqnarray*}
\end{lemma}

\begin{definition}
\label{def:partial_s}
\cite[Definition 3.5 and Proposition 3.6(ii)]{hung_sum}
	For $0< s\in \nat$ , let $\partial_s :\boldGamma_s \rightarrow 
\boldGamma_{s-1} \otimes \Sigma^{-1} \field_p$
denote the composite
\[
	\boldGamma_s
\stackrel{\psi_{s-1,1}}{\rightarrow}
\boldGamma_{s-1} \otimes \boldGamma_1
\stackrel{\boldGamma_{s-1} \otimes \partial}{\rightarrow}
\boldGamma_{s-1} \otimes \Sigma^{-1} \field_p.
\]
\end{definition}

\begin{proposition}
	\label{prop:diff_d_partial}
 For $M \in \obj \amod$ and $0< s\in \nat$, there is a natural commutative
diagram
\[
 \xymatrix{
\rfunc_s \Sigma M
\ar[r]^{d_s}
\ar@{^(->}[d]
&
 \rfunc_{s-1} M
\ar@{^(->}[d]
\\
\boldGamma_s \largetensor \Sigma M
\ar[r]_{\partial_s \largetensor \Sigma M}
&
 \boldGamma_{s-1} \largetensor M,
}
\]
in which the vertical morphisms are the canonical inclusions.
\end{proposition}

\begin{proof}
The result follows from Lemma \ref{lem:compatibility_diff} (cf. also Remark
\ref{rem:Gamma_plus}).
\end{proof}

The key to the linearity results is the following Lemma.

\begin{lemma}
\label{lem:Q^u_kill_fractions}
For $M \in \obj \unst $,  $t \in \nat$  and  $u:= \big[\frac{t+1}{2}\big]$, the
submodule $Q_{s,0} ^u \rfunc_s (\Sigma^{-t} M) $ of $\rfunc_s (\Sigma^{-t}M)
\subset \boldGamma_s \otimes \Sigma^{-t} M$
is contained in $H^* (BV_s) ^{GL_s} \otimes \Sigma^{-t}M\subset \boldGamma_s
\otimes \Sigma^{-t} M$.
\end{lemma}

\begin{proof}
 By choice of $u$, $\Sigma^{2 u } \Sigma^{-t} M$ is unstable and hence $\rfunc_s
\Sigma^{2 u } \Sigma^{-t} M$ is a
submodule of $H^* (BV_s) ^{GL_s} \otimes \Sigma^{2 u } \Sigma^{-t} M$.

There is a natural isomorphism
$
 \rfunc_s \Sigma^{2 u } \Sigma^{-t} M \cong
\Sigma^{2u} Q_{s,0}^{u} \rfunc_s (\Sigma^{-t}M),
$
 considered as a submodule of $\Sigma^{2u}\rfunc_s (\Sigma^{-t}M)$, by
Proposition \ref{prop:rfuncs_suspension}. The
result follows.
\end{proof}

Consider the differential $
 d_s : \Sigma \rfunc_s \Sigma^{s-1} N
\rightarrow
\Sigma \rfunc_{s-1} \Sigma^{s-2} N
$ of $\dcx _\bullet N$. The first term is  a module over $\field_p[Q_{s,i}]$ and
the second over $\field_p
[Q_{s-1,j}]$, hence over $\field_p[Q_{s,i}]$, via $\phi_s$ of Lemma
\ref{lem:dickson_phi_s}.

In general, the differential is not $\field_p [Q_{s,i}]$-linear; however it
becomes linear when $N$ is an iterated desuspension
of an unstable module, after restricting to subalgebras of the form
$\Phi^k \field_p [Q_{s,i}]$, for suitably large $k$.

\begin{proposition}
\label{prop:diff_linearity}
For $M \in \obj \unst$,  $t\in \nat$, $u:= \big[\frac{t+1}{2}\big]$ and $w\in
\nat$
 such that $p^w \geq u$, the morphism
$ 
 d_s : \rfunc_s (\Sigma^{-t} M )
\rightarrow
\rfunc_{s-1} (\Sigma^{-(t+1) } M ).
$
is $\Phi^w \field_p[Q_{s,i}]$-linear.
\end{proposition}

\begin{proof}
By Proposition \ref{prop:diff_d_partial} there is
commutative diagram
\[
 \xymatrix{
\rfunc_s \Sigma^{-t} M
\ar[rr]^{d_s}
\ar@{^(->}[d]
&&
\rfunc_{s-1} \Sigma^{-(t+1)} M
\ar@{^(->}[d]
\\
\boldGamma_s \otimes \Sigma^{-t}M
\ar[r]_(.4){\psi_{s-1,1} \otimes 1}
&
\boldGamma_{s-1} \otimes \boldGamma_1 \otimes \Sigma^{-t}M
\ar[r]_{1 \otimes \partial \otimes 1 }
&
\boldGamma_{s-1} \otimes \Sigma^{-(t+1)}M.
}
\]
(Here the instability of $M$ implies that  large tensor products are
unnecessary.)

The vertical inclusions are morphisms of  $\field_p [Q_{s,i}]$-modules and
$\field_p [Q_{s-1,j}]$-modules respectively; the latter can be
considered as a morphism of $\field_p
[Q_{s,i}]$-modules as above, via the morphism
$\phi_s$ of Lemma \ref{lem:dickson_phi_s}. It suffices to show that the
composite is $\Phi^w \field_p[Q_{s,i}]$-linear.

Lemma \ref{lem:Q^u_kill_fractions} and Lemma \ref{lem:calculate_psi_s-1_1} imply
that the image of
$\rfunc_s \Sigma^{-t}M $ in $\boldGamma_{s-1} \otimes \boldGamma_1 \otimes
\Sigma^{-t} M$ lies in the submodule
$$(Q_{s-1,0}^p \otimes Q_{1,0} ) ^{-u}\Big(H^* (BV_{s-1} )^{GL_{s-1}} \otimes
H^* (BV_1)^{GL_1} \otimes \Sigma^{-t}
M\Big).$$

After multiplying by an element of $H^* (BV_{s-1} )^{GL_{s-1}} \otimes H^*
(BV_1)^{GL_1}$ of the form $\alpha \otimes
Q_{1,0} ^n$, where $n \geq u$, this element lies in $\boldGamma_{s-1}\otimes H^*
(BV_1)^{GL_1} \otimes \Sigma^{-t}
M$, which is contained in the kernel of the morphism induced by $\partial_1$.

By Lemma \ref{lem:calculate_psi_s-1_1},
$$\psi_{s-1,1} Q_{s,j}^{p^w} =
\left\{
\begin{array}{ll}
 Q_{s-1, 0}^{p^{w+1}} \otimes Q_{1,0}^{p^w} & j = 0 \\
Q_{s-1,0}^{(p-1)p^w}Q_{s-1,j}^{p^w} \otimes Q_{1,0}^{p^w} + Q_{s-1,
j-1}^{p^{w+1}} \otimes 1 &  j>0. \\
\end{array}
\right.
$$
The hypothesis $p^w \geq u$ allows the previous remark to be applied, so that
the terms involving
$Q_{1,0}^{p^w}$ can be discarded. The result follows.
\end{proof}

%%%%%%%%%%%%%%%%%%%%%%%%%%%%%%%%%%%%%%%%%%%%%%%%%%%%%%%%%%%%%%%%%%%%
\subsection{Module structures on derived functors of destabilization}

\begin{theorem}
\label{thm:module_Ds}
For $M \in \obj \unst$ and  $s, t, w\in \nat$ such that $p^w \geq \big[ 
\frac{t-s+1}{2} \big]$, 
 $\destab_s (\Sigma^{-t} M)$ has a natural $\Phi^{w+1}\field_p
[Q_{s,i}]$-structure in  $\unst$.

If $t \leq s$,  then $\destab_s (\Sigma^{-t}M ) $ has a natural $ \field_p
[Q_{s,i}]$-module
structure in $\unst$.
\end{theorem}

\begin{proof}
By Theorem \ref{thm:deriv_destab}, $\destab_s \Sigma^{-t} M $ is calculated as
the homology of:
\[
 \Sigma \rfunc_{s+1} (\Sigma^{s-t}M)
\stackrel{d_{s+1}}{\rightarrow}
 \Sigma \rfunc_{s} (\Sigma^{s-t-1}M)
\stackrel{d_{s}}{\rightarrow}
 \Sigma \rfunc_{s-1} (\Sigma^{s-t-2}M).
\]
The morphism $d_s$ is $\Phi^{w} \field_p [Q_{s,i}]$-linear,
by Proposition \ref{prop:diff_linearity},
 hence the kernel has a natural $\Phi^{w} \field_p [Q_{s,i}]$-module structure
in $\amod$, which
restricts to a natural $\Phi^{w+1} \field_p [Q_{s,i}]$-module
structure in $\amod$.

Similarly, the morphism $d_{s+1}$ is $\Phi^{w} \field_p [Q_{s+1,l}]$-linear;
hence the image is
naturally a sub $\Phi^{w} \field_p [Q_{s+1,l}]$-module of 
$\Sigma \rfunc_{s} (\Sigma^{s-t-1}M)$ in
$\amod$,  where the $\field_p [Q_{s+1,l}]$-structure on
$\Sigma \rfunc_{s} (\Sigma^{s-t-1}M)$ is induced by
restriction along $\phi_{s+1} : \field_p [Q_{s+1,l}]
\rightarrow  \field_p [Q_{s,i}]$, which surjects onto the subalgebra $\Phi
\field_p [Q_{s,i}]$ by Lemma \ref{lem:dickson_phi_s}. It follows that the image
of
$d_{s+1}$ is a sub $\Phi^{w+1} \field_p
[Q_{s,i}]$-module in $\amod$ and hence the homology has a
natural $\Phi^{w+1} \field_p
[Q_{s,i}]$-module structure, as required.

In the case $t\leq s$, one can take $w=0$; the differential $d_{s+1}$ is
trivial, hence the additional $\Phi$  is unnecessary.
\end{proof}

\begin{remark}
The case $t<s$ is immediate from Zarati's result (recovered here
as Corollary \ref{cor:unstable_rs}), which provides a $K_s^+$-module structure.
\end{remark}

\providecommand{\bysame}{\leavevmode\hbox to3em{\hrulefill}\thinspace}
\providecommand{\MR}{\relax\ifhmode\unskip\space\fi MR }
% \MRhref is called by the amsart/book/proc definition of \MR.
\providecommand{\MRhref}[2]{%
  \href{http://www.ams.org/mathscinet-getitem?mr=#1}{#2}
}
\providecommand{\href}[2]{#2}

\end{document}